\newcommand{\MG}[1]{{\color{black} #1}}
\renewcommand{\vec}{\mathbf}
\newcommand{\VD}[1]{{\color{black} #1}}
\DeclareMathOperator{\expo}{e}
\DeclareMathOperator{\Part}{Part}
\DeclareMathOperator{\csgn}{csgn}
\DeclareMathOperator{\sg}{sg}
\title{Natural domain decomposition algori\MG{thms} for the solution of
  time-harmonic elastic waves}
\author{R. Brunet\thanks{Department of Mathematics and Statistics, University of Strathclyde, Glasgow, UK, E-mail: {Romain.Brunet@strath.ac.uk}.}
\and V. Dolean\thanks{Department of Mathematics and Statistics, University of Strathclyde, Glasgow, UK, and Laboratoire J.A.~Dieudonn\'e, CNRS, University C\^ote d'Azur, Nice, France. E-mail: {work@victoritadolean.com}.}   
\and M. J. Gander \thanks{Universit\'e de Gen\`eve, 2-4 rue du
  Li\`evre, Gen\`eve. E-mail: {martin.gander@unige.ch}.}
}
\begin{document}

\maketitle

\begin{abstract}
We study for the first time Schwarz domain decomposition methods for
the solution of the Navier equations modeling the propagation of
elastic waves. These equations in the time harmonic regime are
difficult to solve by iterative methods, even more so than the
Helmholtz equation. We first prove that the classical Schwarz method
is not convergent when applied to the Navier equations, and can thus
not be used as an iterative solver, only as a preconditioner for a
Krylov method.  We then introduce more natural transmission conditions
between the subdomains, and show that if the overlap is not too small,
this new Schwarz method is convergent. We illustrate our results with
numerical experiments, both for situations covered by our technical
two subdomain analysis, and situations that go far beyond, including
many subdomains, cross points, heterogeneous materials \MG{in a
  transmission problem}, and Krylov acceleration. Our numerical
results show that the Schwarz method with adapted transmission
conditions leads systematically to a better solver for the Navier
equations than the classical Schwarz method.
\end{abstract}

\begin{keywords} 
Domain decomposition methods, Schwarz preconditioners, time-harmonic
elastic waves, Navier equations.
\end{keywords}

\begin{AMS}
65N55, 65N35, 65F10
\end{AMS}

\pagestyle{myheadings}
\thispagestyle{plain}
\markboth{R. Brunet, V. Dolean, M.J. Gander}{Solvers and preconditioners for time-harmonic elastic waves}

\section{Introduction}

Time harmonic problems are difficult to solve by iterative methods in
the medium to high frequency regime, see \cite{Ernst:12:NAM} for the
case of the Helmholtz equation, which is the prototype of such time
harmonic problems with oscillatory solutions. Domain decomposition
methods are a natural choice as iterative solvers for such problems,
since they are by construction parallel and can still locally use
direct solvers without convergence problems. To obtain good domain
decomposition convergence for time harmonic problems, adapted
transmission conditions are however needed between subdomains.  Such
transmission conditions were first studied for the Helmholtz equation
by Despr\`es in \cite{Despres:1990:DDP,Despres:1991:MDD}, and later
optimized variants were introduced and analyzed by Chevalier in his
PhD thesis \cite{Chevalier:1998:MNT}, see also \MG{Chevalier and
  Nataf} \cite{Chevalier:1998:SMO}, \MG{the work by Collino, Delbue,
  Joly and Piacentini} \cite{Collino:1997:NIM}, and Gander et
al. \cite{Gander:2001:OSH,gander2007optimized,gander2016optimized}. Very
similar in nature to the Helmholtz equations, high-frequency
time-harmonic Maxwell's equations are also very difficult to solve
iteratively, and the design of efficient domain decomposition methods
for the intermediate to high frequency regime is even harder. First
optimized transmission conditions both for the first and second order
formulations of Maxwell's equations can already be found in the PhD
thesis of Chevalier \cite[section 4.7]{Chevalier:1998:MNT} and Collino
\MG{et al.} \cite{Collino:1997:NIM}, but were then more systematically
developed by Alonso\MG{-Rodriguez and Gerardo-Giorda}
\cite{Alonso:06:NND}, and especially in Dolean et
  al. \cite{Dolean:08:DDM,Dolean:09:OSM,ElBouajaji:12:OSM}, see also
  Peng, Rawat and Lee \cite{Peng:10:ODD}\MG{, and references therein.}  The
Analytic Incomplete LU factorization (AILU) \cite{GanderAILU05}, the
sweeping preconditioner \cite{EY1,EY2}, the source transfer domain
decomposition \cite{Chen13a,Chen13b}, the method based on single layer
potentials \cite{Stolk}, and the method of polarized traces \cite{ZD},
are all methods in this same class of domain decomposition methods
with more effective transmission conditions, which became known under
the name optimized Schwarz methods, see
\cite{gander2006optimized,gander2008schwarz} for an introduction, and
\cite{ganderzhang2018SIREV} and references therein for a thorough
treatment when applied to time harmonic wave propagation problems.

To the best of our knowledge, the use of Schwarz methods for
time-harmonic elastic waves modeled by the Navier equations has not
been studied so far, and our goal is to investigate classical Schwarz
methods, and also a new variant that uses more natural transmission
conditions between the subdomains when applied to the Navier
equations. To do so, we study the Schwarz methods at the continuous
level, for a simplified decomposition as it has become standard with
two subdomains, to gain insight into the effect of transmission
conditions on the performance of the method. To test the method, we
then discretize the problems and implement the Schwarz methods using
Restricted Additive Schwarz (RAS) introduced by Cai and
Sarkis in \cite{Cai:99:RAS}, which represents a faithful
implementation of the continuous parallel Schwarz method of Lions, see
\cite{gander2008schwarz}. This is especially important when more
natural transmission conditions are used, see \cite{Stcyr:07:OMA} for
Optimized RAS (ORAS).

Our paper is structured as follows: in Section \ref{sec:classical}, we
present and analyze the classical Schwarz algorithm applied to the
Navier equations. We prove for a simplified two subdomain setting at
the continuous level that the Schwarz algorithm is not a convergent
iterative method in this case. We then introduce new transmission
conditions in Section \ref{sec:optimal} and show first that there
exist transmission conditions which make the Schwarz method converge
in a finite number of steps. These transmission conditions involve
however non local operators, and we thus introduce a local, low
frequency approximation for the Navier equations, for which we prove
convergence of the new Schwarz method provided the overlap is not too
small. In Section \MG{\ref{sec:numerical}} we study these new Schwarz methods
numerically, first for a two subdomain decomposition covered by our
analysis, but then also for the case of many subdomains with cross
points and material heterogeneities. Our numerical results show that
the new Schwarz method performs much better than the classical one
when used as a preconditioner for a Krylov method.
	
\section{Classical Schwarz algorithm for the Navier Equations}
\label{sec:classical}

We are interested in solving the Navier equations in the frequency
domain,
\begin{equation}\label{NavierEq}
  -\left(\Delta^e+\omega^2\rho\right)\vec{u}=\vec{f}\quad \mbox{in $\Omega$},
\end{equation}
where the operator $\Delta^e$ is defined by
$\Delta^e\vec{u}=\mu\Delta\vec{u}+(\lambda+\mu)\nabla(\nabla\cdot\vec{u})$.
To study the basic (non)-convergence properties of the Schwarz
algorithm applied to the Navier equations \eqref{NavierEq}, we
consider the domain $\Omega:={\mathbb R}^2$ and decompose it into two
unbounded overlapping subdomains
$\Omega_1:=(-\infty,\delta)\times{\mathbb R}$ and
$\Omega_2:=(0,\infty)\times{\mathbb R}$, with overlap parameter
$\delta > 0$.  The classical parallel Schwarz algorithm then starts
with an initial guess $\vec{u}_j^0$ on subdomain $\Omega_j$, $j=1,2$,
and solves for iteration index $n=1,2,\ldots$
\begin{equation}\label{ClassicalSchwarz}
  \arraycolsep0.3em
  \begin{array}{rcllrcll}
    -\left(\Delta^e+\omega^2\rho\right)\vec{u}_1^n&=&\vec{f} &\mbox{in $\Omega_1$,}&
        -\left(\Delta^e+\omega^2\rho\right)\vec{u}_2^n&=&\vec{f} &\mbox{in $\Omega_2$,}\\
    \vec{u}_1^n&=&
    \vec{u}_2^{n-1} & \mbox{at $x=\delta$},& 
    \vec{u}_2^n&=&
    \vec{u}_1^{n-1} & \mbox{at $x=0$}.
  \end{array}
\end{equation}
To study the convergence properties of this algorithm, we use a
Fourier transform in the $y$ direction. We denote by $k \in
\mathbb{R}$ the Fourier parameter and $\hat{u}(x,k)$ the Fourier
transformed solution,
$$
\hat{\vec{u}}(x,k) = \int_{-\infty}^{\infty} \expo^{-\mathrm{i} k y} \vec{u}(x,y) \, \mathrm dy, \quad
\vec{u}(x,y) = \frac{1}{2\pi} \int_{-\infty}^{\infty} \expo^{\mathrm{i} k y} \hat{\vec{u}}(x,k) \, \mathrm dk.
$$
The convergence factor for each Fourier mode of
(\ref{ClassicalSchwarz}) is given in 
\begin{lemma}[Convergence factor of classical Schwarz]
\label{th:convclas}
For a given initial guess $\vec{u}_j^0 \in(L^2(\Omega_j))^2$, $j=1,2$,
the classical Schwarz algorithm \eqref{ClassicalSchwarz} with overlap
$\delta>0$ multiplies at each iteration the error in each Fourier mode $k$
with the convergence factor
\begin{equation}\label{RhoClassical}
 \rho_{cla}\left(k,\omega,C_p,C_s,\delta\right) = \max \{|r_+|,|r_-|\},
\end{equation}
where \MG{the eigenvalues of the iteration matrix are}
\begin{equation}\label{r+r-}
r_\pm = \frac{X^2}{2}+e^{-\delta(\lambda_1+\lambda_2)} \pm\frac{1}{2} \sqrt{X^2\left(X^2+4e^{-\delta(\lambda_1+\lambda_2)}\right)},\,
  X = \frac{k^2+\lambda_1\lambda_2}{k^2-\lambda_1\lambda_2}\left(e^{-\lambda_1 \delta}-e^{-\lambda_2 \delta}\right)\MG{,}
\end{equation}
and $\lambda_{1,2} \in \mathbb{C}$ are given by
\begin{equation}  \label{lambda12}
  \lambda_1 = \sqrt{k^2-\frac{\omega^2}{C_s^2}},\quad 
  \lambda_2 = \sqrt{k^2-\frac{\omega^2}{C_p^2}},\quad
C_p=\sqrt{\frac{\lambda+2\mu}{\rho}}, \quad
C_s=\sqrt{\frac{\mu}{\rho}}.
\end{equation}
\end{lemma}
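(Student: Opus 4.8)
The plan is to exploit linearity to reduce to the error equations, diagonalise the Fourier-transformed Navier operator in the $x$-variable, and then read off the iteration as a composition of two $2\times 2$ transfer maps whose trace and determinant produce the claimed eigenvalues $r_\pm$.

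By linearity the errors $\mathbf e_j^n := \mathbf u_j^n - \mathbf u|_{\Omega_j}$ satisfy the homogeneous Navier equations on $\Omega_j$ with Dirichlet data given by the neighbour's previous error, so I may set $\mathbf f=\mathbf 0$. Applying the Fourier transform in $y$ turns \eqref{NavierEq} into a constant-coefficient linear ODE system in $x$ for $\hat{\mathbf e}(x,k)=(\hat u,\hat v)$. Seeking solutions $\propto e^{s x}$, the dispersion relation is $\det A(s,k)=0$ for the $2\times2$ symbol $A(s,k)$, and I expect this determinant to factor (up to a nonzero constant) as $(s^2-\lambda_1^2)(s^2-\lambda_2^2)$, so the characteristic exponents are exactly $\pm\lambda_1,\pm\lambda_2$ from \eqref{lambda12}; the two families correspond to the divergence-free (shear, $C_s$) and curl-free (pressure, $C_p$) modes, which fixes the eigenvectors up to normalisation. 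Selecting the modes that remain in $(L^2)^2$ — i.e.\ $e^{\lambda_j x}$ decaying as $x\to-\infty$ on $\Omega_1$ and $e^{-\lambda_j x}$ decaying as $x\to+\infty$ on $\Omega_2$, with the branch $\mathrm{Re}\,\lambda_j\ge 0$ fixed by a limiting-absorption/radiation argument — gives $\hat{\mathbf e}_1=V\,\mathrm{diag}(e^{\lambda_1 x},e^{\lambda_2 x})\mathbf d$ and $\hat{\mathbf e}_2=\tilde V\,\mathrm{diag}(e^{-\lambda_1 x},e^{-\lambda_2 x})\mathbf c$, where $V,\tilde V$ collect the two eigenvectors selected at each interface.

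Imposing the Dirichlet transmission conditions of \eqref{ClassicalSchwarz} mode by mode, solving $\hat{\mathbf e}_1^n(\delta)=\hat{\mathbf e}_2^{n-1}(\delta)$ for $\mathbf d$ and evaluating at $x=0$ yields a transfer matrix $T_1=V D V^{-1}$ sending $\hat{\mathbf e}_1(\delta)\mapsto\hat{\mathbf e}_1(0)$, and symmetrically $T_2=\tilde V D\tilde V^{-1}$ sending $\hat{\mathbf e}_2(0)\mapsto\hat{\mathbf e}_2(\delta)$, where in both cases $D=\mathrm{diag}(e^{-\lambda_1\delta},e^{-\lambda_2\delta})$. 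Composing the two half-steps, the error trace at $x=0$ is mapped by the iteration matrix $T_1T_2$, whose spectral radius is the per-mode convergence factor; it then remains to identify its two eigenvalues with $r_\pm$. Since similar matrices share trace and determinant, I would compute these two invariants rather than the full product: the determinant is immediate, $\det(T_1T_2)=\det T_1\det T_2=e^{-2\delta(\lambda_1+\lambda_2)}$, matching $r_+r_-$, and the eigenvalues then follow from the quadratic $r^2-(\mathrm{tr})\,r+\det=0$, reproducing the square-root expression in \eqref{r+r-} once the trace is known.

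The main obstacle is the trace. Setting $M=V^{-1}\tilde V$, the cyclic property gives $\mathrm{tr}(T_1T_2)=\mathrm{tr}(DMDM^{-1})$, which depends only on the mode-overlap matrix $M$ between the two interface eigenbases. The plan is to compute $M$ explicitly from the shear/pressure eigenvectors and to check the two facts that make everything collapse: $\det M=1$ (forced by $\det V=\det\tilde V=k^2-\lambda_1\lambda_2$) and $m_{11}m_{22}=\left(\frac{k^2+\lambda_1\lambda_2}{k^2-\lambda_1\lambda_2}\right)^2$. Substituting into $\mathrm{tr}(T_1T_2)=(d_1^2+d_2^2)\,m_{11}m_{22}-2d_1d_2\,m_{12}m_{21}$ with $d_j=e^{-\lambda_j\delta}$ (using $\det M=1$) and $d_1^2+d_2^2-2d_1d_2=(d_1-d_2)^2$ yields $\mathrm{tr}(T_1T_2)=X^2+2e^{-\delta(\lambda_1+\lambda_2)}$ with $X$ exactly as in \eqref{r+r-}; the eigenvalues $r_\pm$ and $\rho_{cla}=\max\{|r_+|,|r_-|\}$ follow. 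The delicate points to handle with care are the correct branch of $\lambda_{1,2}$ and the normalisation of the eigenvectors, together with the removable special cases $k=0$ and $k^2=\lambda_1\lambda_2$, where \eqref{r+r-} is to be read by continuity.
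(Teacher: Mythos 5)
Your proposal is correct and takes essentially the same route as the paper, whose proof simply defers to a ``direct computation working on the error equations'' in the cited short publication: you Fourier-transform the homogeneous error equations, select the decaying (outgoing) P- and S-modes on each half-plane, and identify the eigenvalues of the resulting $2\times2$ iteration matrix from its trace $X^2+2e^{-\delta(\lambda_1+\lambda_2)}$ and determinant $e^{-2\delta(\lambda_1+\lambda_2)}$, which reproduce \eqref{r+r-} exactly. The only delicate points are the ones you already flag — the outgoing-branch choice of $\lambda_{1,2}$ in the propagative regime, and the fact that with the natural eigenvector orientations one finds $\det M=-1$ rather than $1$ (harmless, since only the normalisation-invariant ratios $m_{11}m_{22}/\det M$ and $m_{12}m_{21}/\det M$ enter the trace, and these give $\bigl(\frac{k^2+\lambda_1\lambda_2}{k^2-\lambda_1\lambda_2}\bigr)^2$ as required).
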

\begin{proof}
  The convergence factor can be obtained by a direct computation
  working on the error equations, as it is shown in 
  the short publication \cite{Brunet:2019:CCS}.
\end{proof}

We show in Figure \ref{Fig3a} a plot of the modulus of the
convergence factor \eqref{RhoClassical} as function of the Fourier
mode $k$ for an example of the parameters in the Navier equations.
\begin{figure}
  \centering  
  \includegraphics[width=0.5\textwidth]{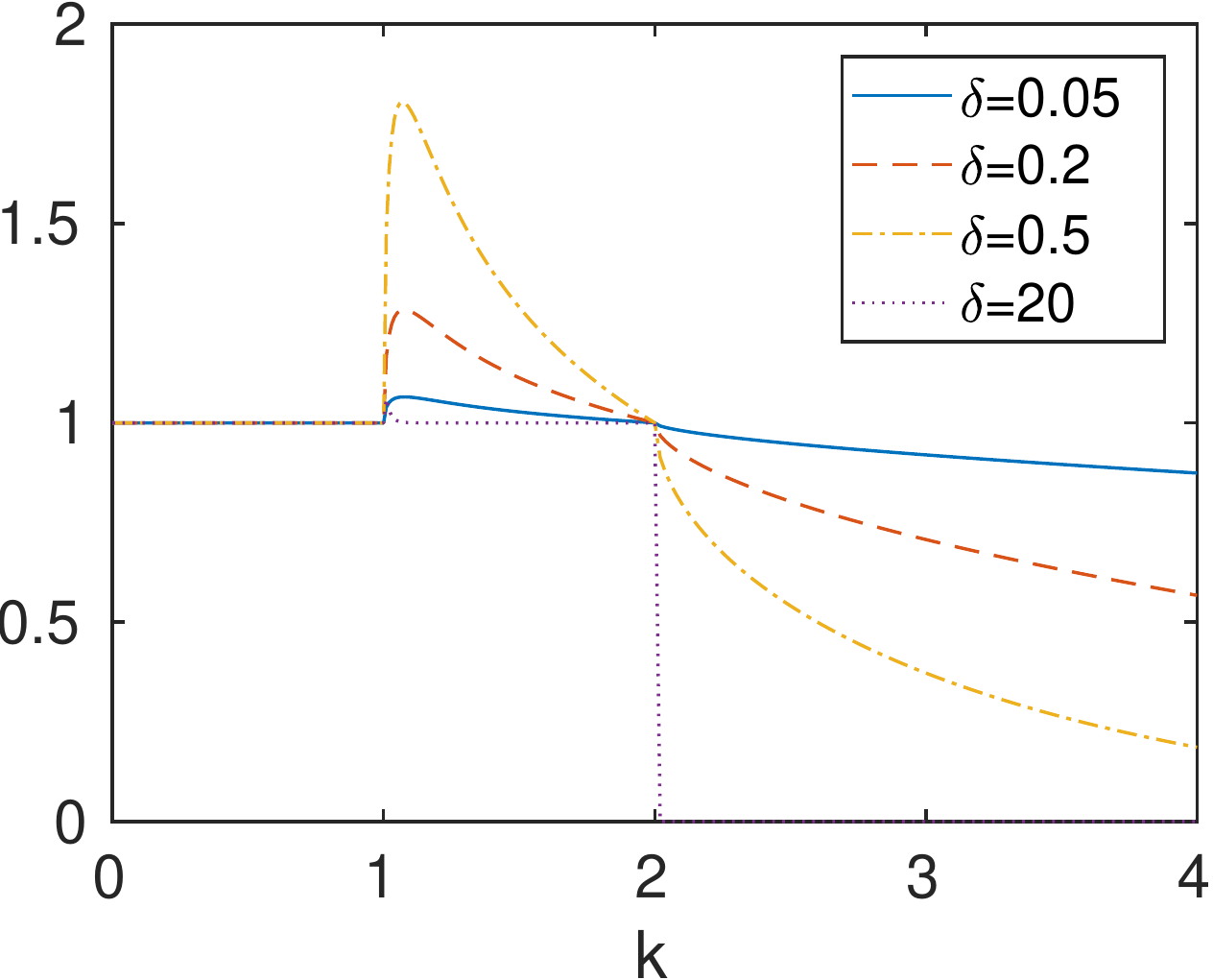}
  \caption{Modulus of the convergence factor of the classical Schwarz
    method for $C_p=1$, $C_s=\frac{1}{2}$, $\omega=1$ for
    different values of the overlap $\delta$.}
  \label{Fig3a}
\end{figure}
We see that the classical Schwarz method converges for high frequencies,
$|\rho_{cla}|<1$, diverges for intermediate frequencies,
$|\rho_{cla}|>1$, and stagnates for low frequencies
$|\rho_{cla}|=1$. We prove in the next theorem that this behavior
holds for all choices of parameters in the Navier equations, and
thus the classical Schwarz method is not an effective iterative solver for
these equations.
\begin{theorem}[(Non-) Convergence of the overlapping classical Schwarz method]
\label{ClassicalSchwarzTheorem}
The convergence factor (\ref{ClassicalSchwarz}) of the overlapping
classical Schwarz method \eqref{ClassicalSchwarz} applied to the
Navier equations (\ref{NavierEq}) satisfies
\begin{equation}
\begin{array}{rcll}
|\rho_{cla}(k,\omega,C_p,C_s,\delta)| 
& = & 1,& k\in[0,\frac{\omega}{C_p}]\cup\{\frac{\omega}{C_s}\}, \\ 
|\rho_{cla}(k,\omega,C_p,C_s,\delta)| 
&>& 1,& k\in (\frac{\omega}{C_p},\frac{\omega}{C_s}), \\
|\rho_{cla}(k,\omega,C_p,C_s,\delta)| 
&<& 1,& k\in (\frac{\omega}{C_s},\infty),
\end{array}
\end{equation}
where the last two results are shown to hold for overlap $\delta$ small.
\end{theorem}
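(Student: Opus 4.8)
The plan is to reduce everything to the two scalar identities that the eigenvalues $r_\pm$ satisfy and then split the Fourier line into the three regimes dictated by the signs of $k^2-\omega^2/C_s^2$ and $k^2-\omega^2/C_p^2$ (recall $C_p>C_s$, so $\omega/C_p<\omega/C_s$). From \eqref{r+r-} a direct check gives $r_++r_-=X^2+2E$ and $r_+r_-=E^2$, where I abbreviate $E:=e^{-\delta(\lambda_1+\lambda_2)}$; hence $r_\pm$ are the roots of $r^2-(X^2+2E)r+E^2=0$ and, crucially, $|r_+r_-|=e^{-2\delta\operatorname{Re}(\lambda_1+\lambda_2)}$. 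Whether $\lambda_1,\lambda_2$ are purely imaginary or real positive is exactly what distinguishes the three cases: for $k\in[0,\omega/C_p]$ both are imaginary, for $k\in(\omega/C_p,\omega/C_s)$ one is imaginary and one real positive, and for $k>\omega/C_s$ both are real positive.

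For $k\in[0,\omega/C_p]$ write $\lambda_j=i\alpha_j$ with $\alpha_j\in\mathbb{R}$. Then the prefactor $(k^2+\lambda_1\lambda_2)/(k^2-\lambda_1\lambda_2)$ is real and a short computation yields the identity $\bar X=-X\bar E$, whence $X^2\bar E=-|X|^2$. Factoring the characteristic polynomial through the substitution $r=E\rho$ (legitimate since $|E|=1$ here) turns it into $\rho^2-\sigma\rho+1=0$ with $\sigma:=(X^2+2E)\bar E=2-|X|^2$, which is real. Such a reciprocal quadratic has both roots on the unit circle iff $|\sigma|\le 2$, i.e. iff $|X|\le 2$; and this bound holds because $|(k^2+\lambda_1\lambda_2)/(k^2-\lambda_1\lambda_2)|\le 1$ while $|e^{-\lambda_1\delta}-e^{-\lambda_2\delta}|\le 2$. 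This gives $|\rho_{cla}|=1$ for \emph{every} $\delta$. The remaining point $k=\omega/C_s$ (where $\lambda_1=0$) is handled directly: the polynomial factors as $(r-1)(r-E^2)$, so the eigenvalues are $1$ and $e^{-2\delta\lambda_2}\in(0,1)$ and again $|\rho_{cla}|=1$.

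For $k>\omega/C_s$ both $\lambda_j$ are real with $0<\lambda_1<\lambda_2$, so $X$ and $E\in(0,1)$ are real, the discriminant is nonnegative, and $r_\pm$ are real and positive; thus $\rho_{cla}=r_+$. Squaring reduces the target $r_+<1$ to the clean equivalent $|X|<1-E$. I would establish this for small $\delta$ by expansion: to leading order it reads $\frac{k^2+\lambda_1\lambda_2}{k^2-\lambda_1\lambda_2}(\lambda_2-\lambda_1)<\lambda_1+\lambda_2$, which after clearing denominators collapses to $\lambda_2^2<k^2$, i.e. $\omega^2/C_p^2>0$ --- always true. Hence $|\rho_{cla}|<1$ here.

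The mixed regime $k\in(\omega/C_p,\omega/C_s)$ is the crux and the main obstacle: now $\lambda_1=ia$ is imaginary while $\lambda_2>0$, so $|r_+r_-|=|E^2|=e^{-2\delta\lambda_2}<1$, and the product being inside the unit disc does \emph{not} by itself force a divergent mode --- one must exhibit a root with $|r|>1$. For small $\delta$ both roots bifurcate from the double root $E$ (attained at $X=0$): writing $r_\pm\approx E\pm\sqrt{E}\,X$ with $X=O(\delta)$, one finds $|r_+|^2-1\approx 2(\operatorname{Re}X-\delta\lambda_2)$. Evaluating $\operatorname{Re}\big(\tfrac{k^2+ia\lambda_2}{k^2-ia\lambda_2}(\lambda_2-ia)\big)$ and reducing the inequality $\operatorname{Re}X/\delta>\lambda_2$ again to $k^2>\lambda_2^2$ (true since $\lambda_2^2=k^2-\omega^2/C_p^2$) shows $|r_+|>1$, so $\rho_{cla}>1$. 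I expect the genuine difficulty to be not the leading-order signs, which all reduce to $\omega^2/C_p^2>0$, but the uniformity of the small-$\delta$ expansions in $k$: near the endpoints $\omega/C_p,\omega/C_s$ and as $k\to\infty$ the quantities $\lambda_j\delta$ need not be small, so the expansions must be controlled there by separate local analysis (for instance near $k=\omega/C_s$ the balance reduces to $1-C_s^2/C_p^2<1$), which is exactly why the statement is restricted to small overlap.
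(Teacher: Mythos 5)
Your proposal is correct, and it reaches the same conclusions by a genuinely different and noticeably more structural route than the paper. The paper proves the statement by brute force: it writes out $|r_\pm|^2$ explicitly in each of the five cases, shows that two complicated expressions ($\Part_1$ and $\Part_2$) vanish identically in the first interval through long trigonometric manipulations, and treats the two transition points $k=\frac{\omega}{C_p}$, $k=\frac{\omega}{C_s}$ as separate degenerate cases. You instead exploit the Vieta relations $r_++r_-=X^2+2E$, $r_+r_-=E^2$ with $E=\expo^{-\delta(\lambda_1+\lambda_2)}$, which I verified against \eqref{r+r-}. In the first interval the identity $\bar X=-X\bar E$ (valid because the prefactor $\frac{k^2+\lambda_1\lambda_2}{k^2-\lambda_1\lambda_2}$ is real there) turns the characteristic polynomial into a reciprocal quadratic $\rho^2-(2-|X|^2)\rho+1=0$ after the substitution $r=E\rho$, and the elementary bound $|X|\le 2$ places both roots on the unit circle; this absorbs the endpoint $k=\frac{\omega}{C_p}$ into the same argument and replaces the paper's several pages of cancellations. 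In the third interval your equivalence $r_+<1\Leftrightarrow |X|<1-E$ (which follows from $f(1)=(1-E)^2-X^2$ for the monic quadratic) and its leading-order reduction to $k^2>\lambda_2^2=k^2-\frac{\omega^2}{C_p^2}$ is clean and matches the paper's expansion. In the middle interval your computation of $\Re(X)/\delta-\lambda_2$ reproduces exactly the paper's coefficient $\frac{2\omega^2\lambda_2\bar\lambda_1^2}{C_p^2(k^4+\bar\lambda_1^2\lambda_2^2)}$, so the divergence claim is established identically. What your approach buys is transparency (each sign boils down to $\frac{\omega^2}{C_p^2}>0$) and a unified treatment of $[0,\frac{\omega}{C_p}]$; what it gives up is only the paper's explicit closed forms for $|r_\pm|$, which are not needed for the theorem. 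Your closing caveat about non-uniformity of the small-$\delta$ expansions in $k$ is fair, but the paper's own proof has the same pointwise-in-$k$ character, so this is not a gap relative to the stated result.
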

\begin{proof}
The proof is quite technical. To simplify the notation, we define for
the case when the roots $\lambda_{1,2}$ in (\ref{lambda12}) are complex
the quantities
\begin{equation}
  \textstyle \mathrm{i} \bar{\lambda}_{1}:=\lambda_1 =
    \mathrm{i}  \sqrt{\frac{\omega^2}{C_s^2} - k^2}, \quad
  \mathrm{i} \bar{\lambda}_{2}:=\lambda_2 =
    \mathrm{i} \sqrt{\frac{\omega^2}{C_p^2} - k^2}.
\label{ImagLambda}
\end{equation}
We have to treat five cases: three intervals for $k$, and two values
$k\in \{\frac{\omega}{C_p},\frac{\omega}{C_s}\}$ separating the
intervals: in the first interval $k
\in(0,\frac{\omega}{C_p})$, $\lambda_{1,2} \in \mathrm{i}
\mathbb{R_+}$, and the eigenvalues (\ref{r+r-}) become
$$
  \textstyle r_\pm = \frac{X^2}{2} +
  \expo^{- \mathrm{i} \delta(\bar{\lambda}_1 + \bar{\lambda}_2)}
  \pm \frac{1}{2} \sqrt{X^2\left(X^2+4\expo^{- \mathrm{i} \delta \left( \bar{\lambda}_1+\bar{\lambda}_2\right)}\right)},
X =\frac{k^2 - \bar{\lambda}_1 \bar{\lambda}_2}{k^2 + \bar{\lambda}_1 \bar{\lambda}_2} \left(\expo^{- \mathrm{i} \bar{\lambda}_1 \delta}-\expo^{-\mathrm{i} \bar{\lambda}_2 \delta}\right).
$$
The square of their modulus is given by
\begin{equation}
\begin{aligned}
&|r_\pm|^2 = 1 + \underbrace{\textstyle\frac{\sqrt{A^2+B^2}+(x^2+y^2)^2}{4} + e_r \left(x^2-y^2\right) + 2xye_i}_{\Part_1} \:\pm\: \frac{\sqrt{2}}{2} \:\times\: \\
&\left( \underbrace{\textstyle \frac{x^2 - y^2 + 2 e_r}{2} \left(\sqrt{A^2+B^2}+A\right)^{\frac{1}{2}} + \csgn\left(B-\mathrm{i}A\right) (xy+e_i) \left(\sqrt{A^2+B^2}-A\right)^{\frac{1}{2}}}_{\Part_2} \right),
\end{aligned}
\label{rcase1}
\end{equation}
where the complex sign is defined as
$$
\csgn(x)=\left\lbrace
\begin{aligned}
 1 \qquad &0<\mathfrak{R}(x) \quad\mbox{or}\quad \mathfrak{R}(x)=0 \;\&\; 0<\mathfrak{I}(x),\\
 -1 \qquad &\mathfrak{R}(x)>0 \quad\mbox{or}\quad \mathfrak{R}(x)=0 \;\&\; \mathfrak{I}(x)>0,
\end{aligned}
\right.
$$
and we introduced the quantities $e_r$, $e_i$, $x$ and $y$, 
$$
\begin{aligned}
  e_r &:=  -\sin\left(\delta\left(\bar{\lambda}_1+\bar{\lambda}_2\right)\right)
  ,\quad
  e_i := \cos\left(\delta\left(\bar{\lambda}_1+\bar{\lambda}_2\right)\right)
  ,\\
  x &:= \Re{(X)} = \textstyle \frac{k^2-\bar\lambda_1 \bar\lambda_2}{k^2+\bar{\lambda}_1 \bar{\lambda}_2} \left(\cos\left(\bar\lambda_1 \delta\right)-\cos\left(\bar\lambda_2 \delta\right)\right),\\
  y &:= \Im{(X)} = \textstyle-\frac{k^2-\bar\lambda_1 \bar\lambda_2 }{k^2+\bar\lambda_1 \bar\lambda_2}\left(\sin\left(\bar\lambda_1 \delta\right) - \sin\left(\bar\lambda_2 \delta\right)\right)\MG{.}
\end{aligned}
$$
\MG{The} terms $A$ and $B$ appearing in the square root are real and
defined by $A+\mathrm{i}B := X^2\left(X^2+4\expo^{- \mathrm{i} \delta
  \left(\bar{\lambda}_1+\bar{\lambda}_2\right)}\right)$, which gives
after some computations
$$
\begin{aligned}
A&= \left(x^2-y^2\right)^2-4x^2y^2 - 8e_ixy + 4e_r\left(x^2-y^2\right),\\
B&= 4(xy+e_i)\left(x^2-y^2\right) + 8e_rxy.
\end{aligned}
$$
Then we obtain by a direct computation that 
$$
\begin{aligned}
&\sqrt{A^2+B^2}=\left(x^2+y^2\right)\sqrt{\left(x^2+y^2\right)^2+8 e_r\left(x^2-y^2\right)+16 e_i x y+16} \\
&= \textstyle \frac{16 \left(k^2-\bar{\lambda}_1 \bar{\lambda}_2\right)^2\sin^2\left(\frac{\delta}{2} \left(\bar{\lambda}_1-\bar{\lambda}_2\right)\right)}{\left(k^2+\bar{\lambda}_1 \bar{\lambda}_2\right)^2} \\
&\times \textstyle \left(1-\frac{2 \left(k^2-\bar{\lambda}_1 \bar{\lambda}_2\right)^2 \sin^2\left(\frac{\delta}{2} \left(\bar{\lambda}_1-\bar{\lambda}_2\right)\right)}{\left(k^2+\bar{\lambda}_1 \bar{\lambda}_2\right)^2} + \frac{\left(k^2-\bar{\lambda}_1 \bar{\lambda}_2\right)^4 \sin^4\left(\frac{\delta}{2} \left(\bar{\lambda}_1-\bar{\lambda}_2\right)\right)}{\left(k^2+\bar{\lambda}_1 \bar{\lambda}_2\right)^4}\right) \\
&= \textstyle \frac{16 \sin^2\left(\frac{\delta}{2} \left(\bar{\lambda}_1-\bar{\lambda}_2\right)\right) \left(\left(k^2-\bar{\lambda}_1 \bar{\lambda}_2\right)^2 \cos^2\left(\frac{\delta}{2} \left(\bar{\lambda}_1-\bar{\lambda}_2\right)\right)+4\bar{\lambda}_1 k^2 \bar{\lambda}_2\right)}{\left(k^2-\bar{\lambda}_1 \bar{\lambda}_2\right)^{-2}\left(k^2+\bar{\lambda}_1 \bar{\lambda}_2\right)^4},\\
\end{aligned}
$$
and
$$
\begin{aligned}
x^2-y^2 &= \textstyle- \frac{4 \left(k^2-\bar{\lambda}_1 \bar{\lambda}_2\right)^2 \sin^2\left(\frac{\delta}{2} \left(\bar{\lambda}_1-\bar{\lambda}_2\right)\right) \cos\left(\delta \left(\bar{\lambda}_1+\bar{\lambda}_2\right)\right)}{\left(k^2+\bar{\lambda}_1 \bar{\lambda}_2\right)^2}, \\ 
x^2+y^2 &= \textstyle\frac{4 \left(k^2-\bar{\lambda}_1 \bar{\lambda}_2\right)^2 \sin^2\left(\frac{\delta}{2}\left(\bar{\lambda}_1-\bar{\lambda}_2\right)\right)}{\left(k^2+\bar{\lambda}_1 \bar{\lambda}_2\right)^2},\\ 
xy &= \textstyle \frac{2 \left(k^2-\bar{\lambda}_1 \bar{\lambda}_2\right)^2 \sin\left(\delta \left(\bar{\lambda}_1+\bar{\lambda}_2\right)\right) \sin^2\left(\frac{\delta}{2}\left(\bar{\lambda}_1-\bar{\lambda}_2\right)\right)}{\left(k^2+\bar{\lambda}_1\bar{\lambda}_2\right)^2}.
\end{aligned}
$$
We now show that $\Part_1$ in \eqref{rcase1} vanishes identically:
we get on the one hand
\begin{equation}
\frac{\left(x^2+y^2\right)^2}{4} = 4\frac{\left(k^2-\bar{\lambda}_1 \bar{\lambda}_2\right)^4 \sin^4\left(\frac{\delta}{2}\left(\bar{\lambda}_1- \bar{\lambda}_2\right)\right)}{\left(k^2+\bar{\lambda}_1 \bar{\lambda}_2\right)^4},
\label{part1a}
\end{equation}
and on the other hand, we have
\begin{equation}
\begin{aligned}
\textstyle \frac{\sqrt{A^2+B^2}}{4} &= \textstyle \frac{\left(k^2-\bar{\lambda}_1 \bar{\lambda}_2\right)^4 \sin^2\left(\delta \left(\bar{\lambda}_1-\bar{\lambda}_2\right)\right)}{\left(k^2+\bar{\lambda}_1 \bar{\lambda}_2\right)^4} + \frac{ 16\sin^2\left(\frac{\delta}{2} \left(\bar{\lambda}_1 -\bar{\lambda}_2\right)\right) \bar{\lambda}_1 k^2 \bar{\lambda}_2}{\left(k^2-\bar{\lambda}_1 \bar{\lambda}_2\right)^{-2}\left(k^2+ \bar{\lambda}_1 \bar{\lambda}_2\right)^4}, \\
e_r\left(x^2-y^2\right) &= \textstyle- \frac{4 \left(k^2-\bar{\lambda}_1\bar{\lambda}_2\right)^2 \sin^2\left(\frac{\delta}{2} \left(\bar{\lambda}_1-\bar{\lambda}_2\right)\right) \cos^2\left(\delta \left(\bar{\lambda}_1+\bar{\lambda}_2\right)\right)}{\left(k^2+\bar{\lambda}_1\bar{\lambda}_2\right)^2},\\
2e_ixy &= \textstyle-\frac{4 \left(k^2-\bar{\lambda}_1\bar{\lambda}_2\right)^2 \sin^2\left(\delta \left(\bar{\lambda}_1+\bar{\lambda}_2\right)\right) \sin^2\left(\frac{\delta}{2} \left(\bar{\lambda}_1-\bar{\lambda}_2\right)\right)}{\left(k^2+\bar{\lambda}_1\bar{\lambda}_2\right)^2},
\end{aligned}
\label{part1b}
\end{equation}
and we obtain by adding the three terms from (\ref{part1b}) to each other
\begin{equation}
-\frac{4 \left(k^2-\bar{\lambda}_1\bar{\lambda}_2)^4 \sin^4\left(\frac{\delta}{2}(\bar{\lambda}_1-\bar{\lambda}_2\right)\right)}{\left(k^2+\bar{\lambda}_1\bar{\lambda}_2\right)^4}.
\label{part1c}
\end{equation}
This leads, by adding (\ref{part1a}) and (\ref{part1c}) indeed to 
$\Part_1\equiv 0$. We next show that also $\Part_2$ in \eqref{rcase1}
vanishes identically: we get
$$
\begin{aligned}
&\textstyle\frac{x^2 - y^2}{2}+ e_r = \cos\left(\delta\left(\bar{\lambda}_1+\bar{\lambda}_2\right)\right) \left( 1 - 2 \frac{\left(k^2-\bar{\lambda}_1\bar{\lambda}_2\right)^2 \sin^2\left(\frac{\delta}{2} \left( \bar{\lambda}_1-\bar{\lambda}_2\right)\right)}{\left(k^2+ \bar{\lambda}_1\bar{\lambda}_2\right)^2} \right), \\
&\textstyle xy+e_i = - \sin\left(\delta\left(\bar{\lambda}_1+\bar{\lambda}_2\right)\right)\left(1-2\frac{\left(k^2-\bar{\lambda}_1\bar{\lambda}_2\right)^2 \sin^2\left(\frac{\delta}{2}\left(\bar{\lambda}_1-\bar{\lambda}_2\right)\right)}{\left(k^2+\bar{\lambda}_1\bar{\lambda}_2\right)^2} \right),
\end{aligned}
$$
and for the term involving $A$ and $B$
$$
\begin{aligned}
\sqrt{\sqrt{A^2+B^2} \pm A} &= \textstyle 4\frac{k^2-\bar{\lambda}_1\bar{\lambda}_2}{\left(k^2+\bar{\lambda}_1\bar{\lambda}_2\right)^2} \sin\left(\frac{\delta}{2}\left(\bar{\lambda}_1-\bar{\lambda}_2\right)\right) \sqrt{1 \mp \cos\left(2\delta\left(\bar{\lambda}_1+\bar{\lambda}_2\right)\right)} \\
&\textstyle\times \sqrt{\left(k^2-\bar{\lambda}_1\bar{\lambda}_2\right)^2 \cos^2\left(\frac{\delta}{2}\left(\bar{\lambda}_1-\bar{\lambda}_2\right)\right)+ 4 k^2 \bar{\lambda}_1 \bar{\lambda}_2}.
\end{aligned}
$$
By analyzing the signs of the different terms, we obtain for the complex sign
$$\csgn(B-\mathrm{i}A)=\sg\left(\cos\left(\delta\left(\bar{\lambda}_1+\bar{\lambda}_2\right)\right)\sin\left(\delta\left(\bar{\lambda}_1+\bar{\lambda}_2\right)\right)\right),$$
and after a lengthy computation we obtain
$$
\begin{aligned}
\Part_2 = C_k\times &\left( \sqrt{1+\cos\left(2\delta\left(\bar{\lambda}_1+\bar{\lambda}_2\right)\right)} \sin\left(\delta\left(\bar{\lambda}_1+\bar{\lambda}_2\right)\right) \right.\\
&\left.-\csgn\left(B-\mathrm{i}A\right)\cos\left(\delta\left(\bar{\lambda}_1+\bar{\lambda}_2\right)\right)\sqrt{1-\cos\left(2\delta\left(\bar{\lambda}_1+\bar{\lambda}_2\right)\right)}\right),
\end{aligned}
$$
where $C_k\in\MG{\mathbb{R^*}:=\mathbb{R}\backslash\left\{0\right\}}$ is a complicated factor depending on $k$.
A direct computation for the second factor of $\Part_2$ shows that
independently of the value of $\csgn(B-\mathrm{i}A)$, we get $\Part_2
\equiv 0$.  We can thus conclude from (\ref{rcase1}) that $
\rho_{cla}\left(k,\omega,C_p,C_s,\delta\right) = \max\{|r_+|,|r_-|\}=
|r_+|=|r_-|= 1$ and therefore the algorithm stagnates in the first
interval $k\in [0,\frac{\omega}{C_p})$, see the first interval
in Figure \ref{Fig3a}.

At the boundary between the first and second interval, where $k =
\frac{\omega}{C_p}$, we have that $\lambda_{2}=0$ and
$\lambda_{1}\in\mathrm{i}\mathbb{R^*_+}$, and therefore \MG{the eigenvalues in} (\ref{r+r-})
become
$$
r_\pm = \frac{1}{2}(1+\expo^{-2\mathrm{i}\bar{\lambda}_1\delta}) \pm \frac{1}{2}\sqrt{\left(1-\expo^{-2\mathrm{i}\bar{\lambda}_1\delta}\right)^2},\quad X = \expo^{-\mathrm{i}\bar{\lambda}_1\delta}-1,
$$
and $\mathfrak{Re}(1-\expo^{-2\mathrm{i}\bar{\lambda}_1\delta})=1-\cos(2\bar{\lambda}_1\delta)$ being positive we have equivalently
$$
r_+ = 1,\ r_- = \expo^{-2\mathrm{i}\bar{\lambda}_1\delta} \quad\Longrightarrow\quad
\rho_{cla}(\textstyle\frac{\omega}{C_p},\omega,C_p,C_s,\delta) = \max\left\{|r_+|,|r_-|\right\} = 1,
$$
and hence the algorithm stagnates also when the first
interval is closed on the right, i.e. for $k\in [0,\frac{\omega}{C_p}]$.

In the second interval, $k \in
(\frac{\omega}{C_p},\frac{\omega}{Cs})$, we have that
$\lambda_{1}\in\mathrm{i}\mathbb{R^*_+}$ and $
\lambda_{2}\in\mathbb{R^*_+}$, and hence \MG{the eigenvalues in}
(\ref{r+r-}) become
$$
\textstyle r_\pm = \frac{X^2}{2} + \expo^{- \delta\left(\mathrm{i} \bar{\lambda}_1 +\lambda_2\right)} \pm \frac{1}{2}\sqrt{X^2(X^2+ 4 \expo^{-\delta\left(\mathrm{i} \bar{\lambda}_1 + \lambda_2\right)})},\,
X = \frac{k^2+ \mathrm{i} \bar{\lambda}_1 \lambda_2}{k^2- \mathrm{i} \bar{\lambda}_1 \lambda_2}(\expo^{-\mathrm{i} \bar{\lambda}_1 \delta}-\expo^{-\lambda_2\delta}).
$$
We compute the modulus of the eigenvalues and expand them
for overlap parameter $\delta$ small to find
$$
\textstyle
  {|r_+|}= 1 +  \frac{2 \omega^2 \lambda_2 \bar{\lambda}_1^2}{C_p^2\left(k^4+\bar{\lambda}_1^2 \lambda_2^2\right)} \delta + \mathcal{O}(\delta^2), \quad
{|r_-|}= 1 - \frac{2 \omega^2\lambda_2 k^2}{C_s^2\left(k^4 + \bar{\lambda}_1^2 \lambda_2^2\right)} \delta + \mathcal{O}(\delta^2).
$$
We thus obtain that $\rho_{cla}(k,\omega,C_p,C_s,\delta) =
\max\{|r_+|,|r_-|\}$ is bigger than one for $\delta$ small and the
method diverges, see the middle interval in Figure
\ref{Fig3a}\footnote{Numerically we observe that also for a large
  overlap, the algorithm diverges, see Figure \ref{Fig3a}, but
  this seems to be difficult to prove.}.

Between the second and third interval, where $k = \frac{\omega}{C_s}$,
we have that $\lambda_{1}=0$ and $\lambda_{2} =
\frac{\omega\sqrt{C_p^2-C_s^2}}{C_s C_p}>0$, and hence \MG{the
  eigenvalues in} (\ref{r+r-}) become
$$
r_\pm = \frac{1}{2}\left(1+\expo^{- 2\lambda_2\delta}\right) \pm \frac{1}{2}\sqrt{\left(1-\expo^{-2\lambda_2\delta}\right)^2}.
$$
We thus obtain
$$
  r_+ = 1,\ r_- = \expo^{-2\lambda_2\delta} \quad \Longrightarrow\quad
  \rho_{cla}(\textstyle\frac{\omega}{C_s},\omega,C_p,C_s,\delta)
  = \max\{|r_+|,|r_-|\} = 1,
$$
and the algorithm stagnates for $k = \frac{\omega}{C_s}$.
  
In the last interval, $k \in \left(\frac{\omega}{Cs},\infty\right)$,
$\lambda_{1,2}\in\mathbb{R^*_+}$ and by expanding $r_\pm>0$ from
\eqref{r+r-} for $\delta$ small, we get
$$
 \textstyle {r_+}= 1 - \frac{2 \lambda_2 \omega^2}{C_s^2(k^2-\lambda_1 \lambda_2)}\delta + \mathcal{O}(\delta^2) \overset{}{<} 1,\quad
  {r_-}= 1 - \frac{2 \lambda_1 \omega^2}{C_p^2(k^2-\lambda_1 \lambda_2)}\delta + \mathcal{O}(\delta^2) < 1,
$$
since $k^2-\lambda_1 \lambda_2 >0$. We can thus conclude that
$$
\rho_{cla}(k,\omega,C_p,C_s,\delta) = \max\{|r_+|,|r_-|\} < 1,
$$
see the last interval in Figure \ref{Fig3a}, where we also
see that $\lim_{k\rightarrow \infty} r_\pm = 0$, since all the real
exponentials involved in the expressions of $r_{\pm}$ are decreasing
to $0$ as $k$ increases.
\end{proof}

We see from Theorem \ref{ClassicalSchwarzTheorem} that the classical
Schwarz method with overlap can not be used as an iterative solver to
solve the Navier equations, since the method stagnates for low
frequencies and even diverges for intermediate frequencies; only high
frequencies are converging. A precise estimate for how fast the
classical Schwarz method diverges depending on the overlap is
given in the short publication \cite{Brunet:2019:CCS}.

\section{New Transmission Conditions for the Schwarz algorithm}
\label{sec:optimal}

A remedy for the divergence problems of the classical Schwarz method is
to introduce different transmission conditions, and to consider the
new Schwarz method
\begin{equation}\label{GeneralOptimizedSchwarz}
  \arraycolsep0.15em
  \begin{array}{rcllrcll}
  -\left(\Delta^e+\omega^2\rho\right)\vec{u}_1^n&=&\vec{f} &
    \mbox{in $\Omega_1$,}&
    -\left(\Delta^e+\omega^2\rho\right)\vec{u}_2^n&=&\vec{f}
    &\mbox{in $\Omega_2$,}\\
    \left({\cal T}_1+{\cal S}_1\right)\vec{u}_1^n&=&
    \left({\cal T}_1+{\cal S}_1\right)\vec{u}_2^{n-1} &x=\delta,& 
    \left({\cal T}_2+{\cal S}_2\right)\vec{u}_2^n&=&
    \left({\cal T}_2+{\cal S}_2\right)\vec{u}_1^{n-1},& x=0,
  \end{array}
\end{equation}
where the traction operators ${\cal T}_j$, $j=1,2$, are defined by
$T_j(\vec{u})=2\mu\frac{\partial \vec{u}}{\partial
  n_j}+\lambda\vec{n}_j\nabla\cdot\vec{u}
+\mu\vec{n}_j\times\nabla\times\vec{u}$, and the operators ${\cal
  S}_j$ are two by two matrix valued operators one can choose to
obtain better convergence. The traction operators ${\cal T}_j$ play
for the Navier equations the role the Neumann condition plays for the
Poisson equation. Like we obtained the convergence factor of the
classical Schwarz algorithm using a Fourier transform in Lemma
\ref{th:convclas}, we can obtain the convergence factor in the case
where more general transmission operators ${\cal S}_{1,2}$
with Fourier symbols $\widehat{\cal S}_{1,2}$ are used.
\begin{lemma}\label{general_opti}
For a given initial guess $\vec{u}_j^0 \in(L^2(\Omega_j))^2$, $j=1,2$,
the general Schwarz algorithm with overlap
(\ref{GeneralOptimizedSchwarz}) has for each Fourier mode the
convergence factor
\begin{equation}
 \rho_{opt}(k,\omega,C_p,C_s,\delta) = \left(\max \{|r_+|,|r_-|\}\right)^{\frac{1}{2}}, \quad
 r_\pm = \frac{X^2}{2}+ Y \pm \frac{1}{2} \sqrt{X^2(X^2+4Y)},
 \label{OptiCv}
\end{equation}
with
\begin{equation}
X = \expo^{-\lambda_1 \delta}b_{11} - \expo^{-\lambda_2 \delta}b_{22}, \quad
Y = \frac{b_{11}b_{22}-b_{12}b_{21}}{\expo^{\lambda_1 \delta}\expo^{\lambda_2 \delta}}, \quad
\begin{bmatrix}
b_{11}  & b_{12} \\
b_{21}  & b_{22}
\end{bmatrix} := B_2^{-1}B_1,
\label{B1B2}
\end{equation}
where
\begin{equation}
B_1 =
\begin{bmatrix}
\widehat{\cal S}_2(1,1) - 2 \lambda_1 \rho C_s^2 - \mathrm{i}  \frac{  \lambda_1 \widehat{\cal S}_2(1,2)}{k}
& \widehat{\cal S}_2(1,2) + \mathrm{i} \frac{2k^2\rho C_s^2 - \lambda_2 \widehat{\cal S}_2(1,1) - \rho\omega^2}{k}\\
 \widehat{\cal S}_2(2,1) - \mathrm{i} \frac{2 k^2 C_s^2\rho - \lambda_1 \widehat{\cal S}_2(2,2) - \rho\omega^2}{k}
& \widehat{\cal S}_2(2,2) + 2 C_s^2 \rho \lambda_2 + \mathrm{i} \frac{ \lambda_2 \widehat{\cal S}_2(2,1)}{k}
\end{bmatrix},
\label{OptiB1}
\end{equation}
\begin{equation}
B_2 = 
\begin{bmatrix}
\widehat{\cal S}_2(1,1) + 2 \lambda_1 \rho C_s^2 + \mathrm{i}  \frac{  \lambda_1 \widehat{\cal S}_2(1,2)}{k}
& \widehat{\cal S}_2(1,2) + \mathrm{i} \frac{2k^2\rho C_s^2 + \lambda_2 \widehat{\cal S}_2(1,1) - \rho\omega^2}{k}\\
 \widehat{\cal S}_2(2,1) - \mathrm{i} \frac{2 k^2 C_s^2\rho + \lambda_1 \widehat{\cal S}_2(2,2) - \rho\omega^2}{k}
& \widehat{\cal S}_2(2,2) - 2 C_s^2 \rho \lambda_2 - \mathrm{i} \frac{ \lambda_2 \widehat{\cal S}_2(2,1)}{k}
\end{bmatrix},
\label{OptiB2}
\end{equation}
and $\lambda_{1,2} \in \mathbb{C}$ are given in \eqref{lambda12}.
\end{lemma}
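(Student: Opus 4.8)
The plan is to follow the same Fourier strategy that underlies Lemma~\ref{th:convclas}, but now carrying the two-by-two transmission symbols $\widehat{\mathcal S}_{1,2}$ through the computation. First I would pass to the error equations: by linearity the errors $\vec e_j^n:=\vec u_j^n-\vec u|_{\Omega_j}$ satisfy the homogeneous Navier equations $-(\Delta^e+\omega^2\rho)\vec e_j^n=\vec 0$ on each subdomain, with transmission data supplied by the other subdomain's previous error, so one may set $\vec f=\vec 0$ and study the decay of $\vec e_j^n$. Applying the Fourier transform in $y$ turns this into a constant-coefficient system of ODEs in $x$ for each mode $k$; the dispersion relation, obtained by setting the determinant of the Fourier symbol to zero, factors into a pressure part and a shear part and produces exactly the four characteristic exponents $\pm\lambda_1,\pm\lambda_2$ with $\lambda_{1,2}$ as in \eqref{lambda12}, each exponent carrying a fixed polarization vector obtained as the kernel vector of the symbol.

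Next I would write the bounded local solutions. Requiring $L^2$-decay forces, on $\Omega_1=(-\infty,\delta)$, the two modes $\expo^{\lambda_1 x}$, $\expo^{\lambda_2 x}$ and, on $\Omega_2=(0,\infty)$, the two modes $\expo^{-\lambda_1 x}$, $\expo^{-\lambda_2 x}$, so each subdomain error is described by a pair of amplitudes, say $(\alpha_1^n,\alpha_2^n)$ and $(\beta_1^n,\beta_2^n)$. The core computation is to evaluate the Fourier symbol of the transmission operator $\mathcal T_j+\mathcal S_j$ on these polarized modes: one replaces $\partial_y\mapsto\mathrm i k$ and $\partial_x$ by the relevant exponent, inserts the outward normals $\vec n_1=(1,0)$ at $x=\delta$ and $\vec n_2=(-1,0)$ at $x=0$ into $T_j(\vec u)=2\mu\,\partial_{n_j}\vec u+\lambda\,\vec n_j\nabla\cdot\vec u+\mu\,\vec n_j\times\nabla\times\vec u$, and adds the Fourier symbol $\widehat{\mathcal S}_j$ of $\mathcal S_j$. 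Collecting the action on the two families of modes produces precisely the matrices $B_1$ and $B_2$ of \eqref{OptiB1}--\eqref{OptiB2}; the systematic sign flip $\pm 2\lambda_{1,2}\rho C_s^2$ between $B_2$ and $B_1$ is the signature of the two opposite values of the normal derivative carried by the incoming and outgoing exponentials.

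Then I would impose the transmission conditions of \eqref{GeneralOptimizedSchwarz} mode by mode. The condition at $x=\delta$ relates $(\alpha^n)$ to $(\beta^{n-1})$ and the condition at $x=0$ relates $(\beta^n)$ to $(\alpha^{n-1})$; solving each linear system yields two-by-two maps $M_1$ and $M_2$ built from $B_2^{-1}B_1$ and the overlap factors $\expo^{-\lambda_{1,2}\delta}$. Since the parallel Schwarz update couples the two subdomains only across one another, the one-step error map is block anti-diagonal, $\left(\begin{smallmatrix}0 & M_1\\ M_2 & 0\end{smallmatrix}\right)$, whose eigenvalues are the square roots of those of the double-step matrix $M_1M_2$. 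A direct evaluation shows that $M_1M_2$ has trace $X^2+2Y$ and determinant $Y^2$ with $X,Y$ as in \eqref{B1B2}, so its eigenvalues are exactly the $r_\pm$ of \eqref{OptiCv}; the per-iteration convergence factor is therefore $\bigl(\max\{|r_+|,|r_-|\}\bigr)^{1/2}$, the claimed $\rho_{opt}$, and the square root is precisely the contribution of the anti-diagonal block structure.

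I expect the main obstacle to be the bookkeeping in the middle step: correctly computing the traction symbol on each polarized mode and assembling $B_1,B_2$ with all the $1/k$ cross-couplings coming from the $\nabla\cdot\vec u$ and $\nabla\times\vec u$ terms, and then verifying that the product $M_1M_2$ collapses to determinant $Y^2$ and trace $X^2+2Y$. By contrast, the final diagonalization is routine once these two invariants are known, since the perfect-square determinant and the discriminant $X^2(X^2+4Y)$ make the two-by-two eigenvalue formula close in the compact form of \eqref{OptiCv}.
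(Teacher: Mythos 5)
Your proposal is correct and follows essentially the same route as the paper, which likewise obtains the result by a direct calculation substituting the decaying Fourier-mode solutions into the transmission conditions (deferring the bookkeeping to the PhD thesis \cite[Lemma 2.3]{BrunetPhD2018}). Your identification of the retained modes $\expo^{\lambda_{1,2}x}$ on $\Omega_1$ and $\expo^{-\lambda_{1,2}x}$ on $\Omega_2$, of the block anti-diagonal one-step map explaining the exponent $\tfrac12$, and of the invariants $\operatorname{tr}(M_1M_2)=X^2+2Y$, $\det(M_1M_2)=Y^2$ reproducing $r_\pm$ are all consistent with the stated formulas.
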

\begin{proof}
  This result is obtained by a direct calculation, replacing the
  solutions in Fourier space into the transmission conditions of the
  general Schwarz algorithm \eqref{GeneralOptimizedSchwarz}, for
  details, see the PhD thesis \cite[Lemma 2.3]{BrunetPhD2018}.
\end{proof}

\subsection{An Optimal Schwarz Method}

The new transmission conditions in (\ref{GeneralOptimizedSchwarz}) are
a very powerful tool to fix convergence problems of the classical
Schwarz method, and are used in many modern domain decomposition
methods for time harmonic wave propagation, like the sweeping
preconditioner, source transfer and the method of polarized traces,
which are all variants of the so called optimized Schwarz methods
\cite{gander2006optimized,gander2008schwarz}; for a review, see
\cite{ganderzhang2018SIREV}. To see how powerful this idea is, we
start by introducing the best possible choice, namely transparent
boundary conditions (TBC) as transmission conditions in
(\ref{GeneralOptimizedSchwarz}), which leads to what is called an {\em
  optimal Schwarz method}\footnote{Optimal here is not used in the sense
of scalability, but really means faster convergence is not possible!}:
\begin{theorem}[Convergence of the optimal Schwarz algorithm.]
  \label{OptimalSchwarzThm}
  If one chooses in the new Schwarz algorithm
  (\ref{GeneralOptimizedSchwarz}) the operators ${\cal S}_j$ with the
  Fourier symbols
\begin{equation}\label{OptimalChoice}
\begin{array}{rcl}
\widehat{\cal S}_1(1,1)&=& \rho \frac{\lambda_1 \omega^2}{k^2-\lambda_1\lambda_2}, \\ 
\widehat{\cal S}_1(1,2)&=& +\mathrm{i} k \rho (2C_s^2 -  \frac{\omega^2}{k^2 - \lambda_1 \lambda_2}),\\ 
\widehat{\cal S}_1(2,1) &=& -\mathrm{i} k \rho (2C_s^2 -  \frac{\omega^2}{k^2 - \lambda_1\lambda_2}), \\ 
\widehat{\cal S}_1(2,2) &=& \rho \frac{\lambda_2 \omega^2}{k^2-\lambda_1 \lambda_2}, 
\end{array}
\quad
\begin{array}{rcl}
\widehat{\cal S}_2(1,1)&=& \widehat{\cal S}_1(1,1), \\ 
\widehat{\cal S}_2(1,2)&=& -\widehat{\cal S}_1(1,2), \\
\widehat{\cal S}_2(2,1)&=& -\widehat{\cal S}_1(2,1), \\
\widehat{\cal S}_2(2,2)&=& \widehat{\cal S}_1(2,2),
\end{array}
\end{equation}
where $\lambda_1$ and $\lambda_2$ are given in \eqref{lambda12}, the
resulting algorithm converges in two iterations, and this for all
values of the overlap $\delta\ge 0$, even without overlap, $\delta=0$.
\end{theorem}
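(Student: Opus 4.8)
The plan is to show that the optimal choice \eqref{OptimalChoice} forces the convergence factor of Lemma \ref{general_opti} to vanish identically, $\rho_{opt}(k,\omega,C_p,C_s,\delta)=0$ for every Fourier mode $k$ and every $\delta\ge 0$; convergence in two iterations then follows because a vanishing convergence factor annihilates the interface error after the two transmission exchanges of the parallel algorithm (the first exchange still uses the erroneous initial data on the neighbour, but it already reproduces the exact interface trace, so the second subdomain solves are exact). First I would record the two elementary symmetric functions of the eigenvalues in \eqref{OptiCv}: a direct computation gives $r_++r_-=X^2+2Y$ and $r_+r_-=Y^2$. Hence $r_+=r_-=0$ is equivalent to $X=0$ and $Y=0$, and this is the only way to make $\max\{|r_+|,|r_-|\}$, and therefore $\rho_{opt}$, equal to zero.

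Next I would turn the requirement ``$X=0$ and $Y=0$ for all $\delta\ge 0$'' into purely algebraic conditions on the matrix $B_2^{-1}B_1=[b_{ij}]$ of \eqref{B1B2}, which does not depend on $\delta$. Since $C_p>C_s$ we always have $\lambda_1\ne\lambda_2$, so the functions $e^{-\lambda_1\delta}$ and $e^{-\lambda_2\delta}$ are linearly independent in $\delta$ (whether the $\lambda_i$ are real or imaginary); consequently $X=e^{-\lambda_1\delta}b_{11}-e^{-\lambda_2\delta}b_{22}\equiv 0$ forces $b_{11}=b_{22}=0$, while $Y=\det(B_2^{-1}B_1)\,e^{-(\lambda_1+\lambda_2)\delta}\equiv 0$ forces $\det B_1=0$ (using that $B_2$ stays invertible). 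It therefore suffices to insert the symbols \eqref{OptimalChoice} into \eqref{OptiB1}--\eqref{OptiB2} and verify the three identities $b_{11}=0$, $b_{22}=0$ and $\det B_1=0$; note that these together give $\operatorname{tr}(B_2^{-1}B_1)=\det(B_2^{-1}B_1)=0$, i.e. $B_2^{-1}B_1$ is nilpotent, which is the Fourier-space incarnation of the optimality.

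For the verification itself I would substitute the optimal symbols together with the material identities $\mu=\rho C_s^2$, $\lambda+2\mu=\rho C_p^2$ and the defining relations $\lambda_1^2=k^2-\omega^2/C_s^2$, $\lambda_2^2=k^2-\omega^2/C_p^2$ from \eqref{lambda12}, which are exactly what is needed to collapse the rational expressions in $\lambda_1,\lambda_2,k,\omega$. The conceptual reason the identities must hold is that $\widehat{\cal S}_1$ is precisely the symbol of the exact Dirichlet-to-Neumann (impedance) map of the exterior Navier problem, so that the transmission operator ${\cal T}_1+{\cal S}_1$ is transparent to the outgoing elastic modes; in the Fourier picture this is what forces both the diagonal of $B_2^{-1}B_1$ and $\det B_1$ to vanish. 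I expect the main obstacle to be twofold. First, the explicit inversion of the $2\times 2$ matrix $B_2$ and the cancellation of the resulting rational functions, where essentially all of the algebra concentrates; one should carry a single common denominator $k^2-\lambda_1\lambda_2$ throughout and simplify at the end. Second, the Fourier modes at which the optimal symbols are singular, namely the value $k^2=\lambda_1\lambda_2$ (equivalently $k=\omega/\sqrt{C_p^2+C_s^2}$) where the denominator $k^2-\lambda_1\lambda_2$ vanishes, must be handled as a removable limit so that the claim ``$\rho_{opt}=0$ for all $k$'' genuinely holds without exception.
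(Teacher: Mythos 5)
Your proposal follows essentially the same route as the paper: the paper's proof simply substitutes the symbols \eqref{OptimalChoice} into the formulas of Lemma~\ref{general_opti} and observes that the convergence factor vanishes identically (deferring the algebra to the thesis), which is exactly the verification you organize via the relations $r_++r_-=X^2+2Y$, $r_+r_-=Y^2$ and the resulting conditions on $b_{11}$, $b_{22}$ and the determinant. One minor remark: the singularity you anticipate at $k^2=\lambda_1\lambda_2$ never occurs for real $k$ with the branch conventions of \eqref{lambda12} --- for $k<\omega/C_p$ both roots are purely imaginary so $\lambda_1\lambda_2<0$ and $k^2-\lambda_1\lambda_2>0$, for $k>\omega/C_s$ one has $0\le\lambda_1\lambda_2<k^2$, and in the intermediate band $\lambda_1\lambda_2$ is purely imaginary --- so the value $k=\omega/\sqrt{C_p^2+C_s^2}$ is a spurious root of the squared equation and no limiting argument is needed.
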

\begin{proof}
  If we replace $(\widehat{\cal S}_1,\widehat{\cal S}_2$) defined in
  \eqref{OptimalChoice} into \eqref{OptiB1}, the convergence factor
  obtained vanishes identically and the algorithm thus converges in
  two iterations, independently of any initial guess and the overlap
  $\delta\ge 0$.
\end{proof}

To use the optimal choice \eqref{OptimalChoice} as transmission
operators in practice, one needs to back transform the associated TBC
into the physical domain, and the corresponding ${\cal S}_j$ are non
local operators, because of the inverse transform with square root
terms at the interfaces, like it is the case for many TBCs.  It is
therefore of interest to design local approximations for the optimal
transmissions conditions, like in the development of absorbing
boundary conditions (ABCs), which will lead to a new class of
practical, so called optimized Schwarz algorithms. We approximate the
symbols of the optimal transmission conditions in
\eqref{OptimalChoice} by polynomial symbols in $\mathrm{i}k$ which
correspond to derivatives after the Fourier backtransform, and are
thus local operators.

\subsection{Optimized Schwarz Methods}
\label{sec:absorbing}

We have seen in Section \ref{sec:classical} that the classical Schwarz
method converges well for high frequency error components, $k$ large,
but stagnates for low frequency components and even diverges for
intermediate range frequencies, see Figure \ref{Fig3a}.  It is
therefore natural to approximate the operators ${\cal S}_j$ in the
transmission conditions using a low frequency expansion in the Fourier
variable $k$ of the optimal choice given in Theorem
\ref{OptimalSchwarzThm}. This leads to the so called Taylor
transmission conditions (TTC), which have the symbols
\begin{equation}\label{eq:lowfreq}
  \begin{array}{rcl}
    \widehat{\cal S}_1(1,1)&=&\mathrm{i}\rho\omega C_p+\mathrm{i} \rho \frac{C_p^2}{2\omega}(C_p-2C_s)k^2+\mathcal{O}(k^4),\\
    \widehat{\cal S}_1(1,2)&=&-\mathrm{i}\rho(C_p-2C_s)C_sk+\mathcal{O}(k^3),\\
    \widehat{\cal S}_1(2,1)&=&\mathrm{i}\rho(C_p-2C_s)C_sk+\mathcal{O}(k^3),\\
    \widehat{\cal S}_1(2,2)&=&\mathrm{i}\rho\omega C_s+\mathrm{i}\rho \frac{C_s^2}{2\omega}(C_s-2C_p)k^2 + \mathcal{O}(k^4),
  \end{array}
\end{equation}
and $\widehat{\cal S}_2$ with the same relation to $\widehat{\cal S}_1$ as for the optimal choice in Theorem \ref{OptimalSchwarzThm}. A zeroth order approximation would thus be
\begin{equation}
  \widehat{\cal S}_1^{T_0}(1,1)=\mathrm{i}\rho\omega C_p,\quad
  \widehat{\cal S}_1^{T_0}(1,2)=0, \quad
  \widehat{\cal S}_1^{T_0}(2,1)=0, \quad
  \widehat{\cal S}_1^{T_0}(2,2)=\mathrm{i}\rho\omega C_s,
\end{equation}
which was also obtained as an ABC using a different argument in
\cite{Huttunen:04:UWV}. These ABCs happen to be exact for a particular
combination of plane waves, and thus have a physical sense for this
particular problem.

We show in Figure \ref{Fig3b} the modulus of the convergence factor
of the optimized Schwarz method with Taylor transmission conditions.
\begin{figure}
  \centering  
  \includegraphics[width=0.5\textwidth]{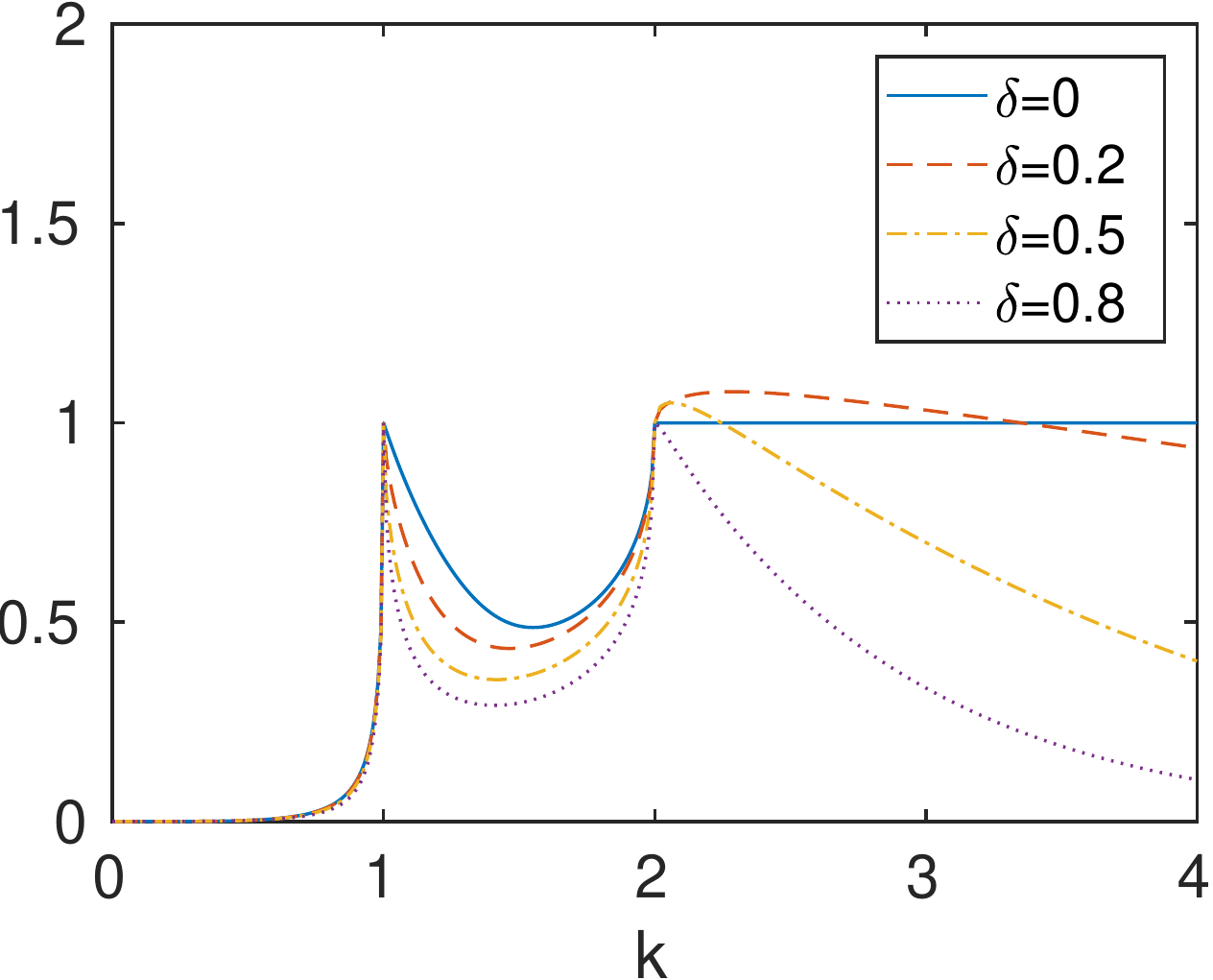}
  \caption{Modulus of the convergence factor of the optimized Schwarz
    method with Taylor transmission conditions for $C_p=1$,
    $C_s=\frac{1}{2}$, $\omega=1$ for different values of the overlap
    $\delta$.}
  \label{Fig3b}
\end{figure}
We see that the method now converges very well for low frequencies,
and also for intermediate frequencies. For high frequencies, we see
that without overlap, $\delta=0$, the method stagnates, since the
convergence factor equals 1. Increasing the overlap leads to
convergence of the very high frequencies, and when the overlap becomes
big enough, the method seems to converge for all frequencies, except at the
two points $k\in\{\frac{\omega}{C_p},\frac{\omega}{C_s}\}$. This is a
very important improvement compared to the classical Schwarz method,
see Figure \ref{Fig3a}, and while for Helmholtz equations there is one
non-convergent frequency when using optimized transmission conditions
\cite{Gander:2001:OSH,gander2007optimized,gander2016optimized}, for
the Navier equations there are two. We prove in the following theorem
that the numerical observations in Figure \ref{Fig3b} indeed hold for
all parameter choices in the Navier equations in the non-overlapping
case.
\begin{theorem}[Convergence of the non-overlapping Schwarz algorithm with TTC]
  The new Schwarz method \eqref{GeneralOptimizedSchwarz} with TTC
  \eqref{eq:lowfreq} for non-overlapping decompositions converges for
  $k \in (0,\frac{\omega}{C_s})\backslash \{\frac{\omega}{C_p}\} $,
  and stagnates with the contraction factor being equal to $1$ for $k
  \in [\frac{\omega}{C_s},\infty)$.
\label{OptTTCs}
\end{theorem}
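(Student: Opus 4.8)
The plan is to specialize the convergence factor of Lemma~\ref{general_opti} to the non-overlapping case $\delta=0$, where the exponentials in \eqref{B1B2} all equal one, so that $X=b_{11}-b_{22}$ and $Y=\det(B_2^{-1}B_1)=\det B_1/\det B_2$, while $\rho_{opt}=(\max\{|r_+|,|r_-|\})^{1/2}$ with $r_+r_-=Y^2$ and $r_++r_-=X^2+2Y$ from \eqref{OptiCv}. Substituting the TTC symbols \eqref{eq:lowfreq} into \eqref{OptiB1}--\eqref{OptiB2}, the crucial observation is that every entry of $\widehat{\cal S}_2$ is purely imaginary, $\widehat{\cal S}_2(i,j)=\mathrm{i}s_{ij}$ with $s_{ij}\in\mathbb{R}$. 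I would then split the analysis according to the nature of $\lambda_{1,2}$ from \eqref{lambda12}, using the notation $\lambda_j=\mathrm{i}\bar\lambda_j$ of \eqref{ImagLambda} whenever $\lambda_j$ is purely imaginary, and treating the separating values $k\in\{\frac{\omega}{C_p},\frac{\omega}{C_s}\}$ by hand. Throughout I would use the physical ordering $C_p>C_s$, hence $\frac{\omega}{C_p}<\frac{\omega}{C_s}$.

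For the fully propagative region $k\in(0,\frac{\omega}{C_p})$, both $\lambda_{1,2}$ are purely imaginary, and a short check shows that then every entry of $B_1$ and $B_2$ is purely imaginary as well, so $B_1=\mathrm{i}P_1$ and $B_2=\mathrm{i}P_2$ with $P_1,P_2$ real. Hence $M:=B_2^{-1}B_1=P_2^{-1}P_1$ is a real matrix, $X,Y\in\mathbb{R}$, and $r_\pm$ are either real or a complex-conjugate pair. I would compute $X$ and $Y=\det P_1/\det P_2$ explicitly and show $\max\{|r_+|,|r_-|\}<1$: when $X^2+4Y<0$ the roots are conjugate with $\max\{|r_+|,|r_-|\}=|Y|$, and when $X^2+4Y\ge 0$ they are real, so in both cases the estimate reduces to bounding $|Y|$ and the larger real root below $1$. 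In the mixed region $k\in(\frac{\omega}{C_p},\frac{\omega}{C_s})$, where $\lambda_1$ is imaginary but $\lambda_2>0$ is real, the column structure of \eqref{OptiB1}--\eqref{OptiB2} (the first column carries $\lambda_1$, the second carries $\lambda_2$) makes the first column of $B_{1,2}$ purely imaginary while the second column acquires a real part; here $Y$ is genuinely complex and I would simplify $X$ and $Y$ directly and again establish $\max\{|r_+|,|r_-|\}<1$.

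For the stagnation region $k\in(\frac{\omega}{C_s},\infty)$, both $\lambda_{1,2}$ are real and positive, and the key structural fact is that the purely imaginary symbols force the entrywise symmetry $B_2=-\overline{B_1}$. Since $\det(-A)=\det A$ and $\det\overline{B_1}=\overline{\det B_1}$ for $2\times2$ matrices, this immediately gives $Y=\det B_1/\overline{\det B_1}$, hence $|Y|=1$ and $|r_+r_-|=|Y|^2=1$. Writing $M=-\overline{B_1}^{-1}B_1=:-N$, one has $\overline N=N^{-1}$, so the eigenvalues of $N$ come in pairs $\{\mu,1/\bar\mu\}$. Setting $w_\pm:=r_\pm/Y$ one has $w_+w_-=1$ and $w_++w_-=X^2/Y+2$, so to obtain $|r_+|=|r_-|=1$ it suffices to show that $X^2/Y$ is real and lies in $[-4,0]$, which I would verify by an explicit evaluation; this yields $\rho_{opt}=1$. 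Finally, the two separating points are handled directly: at $k=\frac{\omega}{C_s}$ one has $\lambda_1=0$ with $\lambda_2>0$, and a short computation gives $\rho_{opt}=1$, so this point belongs to the stagnation interval, while at $k=\frac{\omega}{C_p}$ one has $\lambda_2=0$ and a direct computation again gives $\rho_{opt}=1$, which is precisely the single frequency removed from the convergence interval.

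The main obstacle I anticipate is not the case bookkeeping but the algebra: reducing $M=B_2^{-1}B_1$ to expressions for $X$ and $Y$ that are simple enough to estimate, and then proving the strict inequality $\max\{|r_+|,|r_-|\}<1$ uniformly in the propagative and, especially, the mixed region, where $B_1$ is neither purely real nor purely imaginary and $Y$ is complex. Establishing the exact equality $|r_+|=|r_-|=1$ in the evanescent region, rather than only $|r_+r_-|=1$, is the other delicate point, and it is exactly the conjugation symmetry $B_2=-\overline{B_1}$ together with the membership $X^2/Y\in[-4,0]$ that makes it work.
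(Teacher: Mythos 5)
Your strategy coincides in outline with the paper's: specialize Lemma~\ref{general_opti} to $\delta=0$, substitute the Taylor symbols, and split into five cases according to whether $\lambda_{1,2}$ from \eqref{lambda12} are imaginary, zero, or real. Your structural observations are correct and match what the paper exploits implicitly: the TTC symbols are purely imaginary, so for $k<\frac{\omega}{C_p}$ both $B_1,B_2$ are $\mathrm{i}$ times real matrices and $X,Y$ are real, and the two separating frequencies give $r_+=1$ with $|r_-|\le 1$, hence stagnation. Where you genuinely depart is the evanescent region $k>\frac{\omega}{C_s}$: the conjugation symmetry $B_2=-\overline{B_1}$ (which does hold entrywise, as one checks from \eqref{OptiB1}--\eqref{OptiB2}) gives $|Y|=1$ immediately, and your reduction of $|r_+|=|r_-|=1$ to the single real condition $X^2/Y\in[-4,0]$ is cleaner and more conceptual than the paper's argument, which writes $r_\pm=(R\pm\mathrm{i}I)/D$ with $R,I$ from \eqref{R1I1case5L0} and verifies $R^2+I^2-|D|^2=0$ by a factorization that uses $\lambda_1^2=k^2-\frac{\omega^2}{C_s^2}$; your route still requires one explicit check, but it isolates precisely what must be computed. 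The caveat concerns the convergence half of the theorem: there your proposal only reduces the claims to inequalities deferred to ``explicit evaluation'', and these are exactly where the paper's technical effort lies. In $(0,\frac{\omega}{C_p})$ the paper first shows $X^2+4Y>0$, so the roots are real and only $r_+<1$, equivalently $(1-Y)^2-X^2>0$, must be proved, which factors as $16\omega^6\frac{C_p}{C_s}\bar\lambda_1\bar\lambda_2C^2>0$; in the mixed region $(\frac{\omega}{C_p},\frac{\omega}{C_s})$ --- which you rightly single out as the delicate case --- the paper bounds $\max\{|r_+|,|r_-|\}$ by an explicit $M$ and shows via \eqref{eq:M1}--\eqref{eq:M2} that $M<1$ reduces to $4Z_1\bar Z_2-K^2\lambda_2\bar\lambda_1=4\lambda_2\bar\lambda_1\frac{C_p}{C_s}\omega^6>0$ with $Z_1,Z_2,K$ as in \eqref{eq:kd}. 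Those two factorizations are the substance of the proof; without producing them (or equivalents), your argument remains a sound plan with one elegant improvement rather than a complete proof.
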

\begin{proof}
The proof is again quite technical: the eigenvalues of the iteration
matrix are given by
\begin{equation}\label{eq:rpm}
 r_\pm = \frac{X^2}{2}+ Y \pm \frac{1}{2} \sqrt{X^2(X^2+4Y)},\quad  X = b_{11} - b_{22}, \quad Y = b_{11}b_{22}-b_{12}b_{21},
\end{equation}
where the elements in the matrix $B$ are given by
\begin{equation}
  \arraycolsep0.3em
B=\begin{bmatrix}
b_{11}  & b_{12} \\
b_{21}  & b_{22}
\end{bmatrix}:=\frac{1}{D} \begin{bmatrix}
-Z_1-Z_2 - \mathrm{i} \omega^3 ({\lambda}_{1}-{\lambda}_{2}\frac{C_p}{C_s}) & \mathrm{i} {\lambda}_{2} K \\
-\mathrm{i} {\lambda}_{1} K  & -Z_1-Z_2 +\mathrm{i}\omega^3({\lambda}_{1}-{\lambda}_{2}\frac{C_p}{C_s})
\end{bmatrix}, 
\label{BLke0}
\end{equation}
and $Z_1$, $Z_2$, $K$ and $D$ are defined by 
\begin{equation}\label{eq:kd}
  \arraycolsep0.2em
  \begin{array}{rclrcl}
Z_1&:=& C_s^3\left(k^2+\lambda_1^2\right)^2+\omega^2 C_p k^2,&
Z_2&:=& \left(4 C_s^3 k^2+ C_p\omega^2\right) \lambda_1 \lambda_2,\\
K&:=&2k\left(C_p\omega^2+2C_s^3\left(k^2+{\lambda}_{1}^2\right)\right),&
D&:=&-Z_1 + Z_2 +\mathrm{i}\omega^3 ({\lambda}_{1}+{\lambda}_{2}\frac{C_p}{C_s}).
  \end{array}
\end{equation}
We define now $\bar{\lambda}_j \in \mathbb{R_+},\, j=1,2$ as in
(\ref{ImagLambda}), and study the five cases for $k$ as in the proof
of Theorem \ref{ClassicalSchwarzTheorem}: if $k
\in(0,\frac{\omega}{C_p})$ then $\lambda_{1,2} \in
\mathrm{i} \mathbb{R_+}$, and using \eqref{eq:rpm} we obtain
$$
X = \frac{2\omega^3 }{{D}} \left(\bar{\lambda}_{1}-\bar{\lambda}_{2}\frac{C_p}{C_s}\right),\quad Y=\frac{1}{{D}^2} \left( ({Z}_1+{Z}_2)^2-\omega^6 \left( \bar{\lambda}_{1}-\bar{\lambda}_{2}\frac{C_p}{C_s}\right)^2 + \bar{\lambda}_{1} \bar{\lambda}_{2}{K}^2  \right).
$$
A direct computation shows that $X^2+2Y>0$ and $X^2+4Y>0$, and hence
$r_{+} >|r_{-}|>0$, so we just need to check \MG{that
$$
  r_{+}<1 \quad \Longleftrightarrow \quad
  \left(X^2+2Y\right) + \sqrt{X^2\left(X^2+4Y\right)} < 2.
$$
To show this second inequality, we compute 
$$
\begin{array}{l}
\left(X^2+2Y\right) + \sqrt{X^2\left(X^2+4Y\right)} < 2\\
\qquad  \Longleftrightarrow X^2\left(X^2+4Y\right) < \left(2(1-Y)-X^2\right)^2\\
\qquad \Longleftrightarrow X^4+4X^2 Y < 4(1-Y)^2-4(1-Y)X^2+X^4\\
  \qquad \Longleftrightarrow \left(1-Y\right)^2-X^2>0,
\end{array}
$$
and the last} inequality can be checked by first setting $X=
\widetilde{X}/{{D}}$ and $Y = \widetilde{Y}/{{D}}^2$, which leads to
the condition
$$
  0<(1- \widetilde{Y}/{D}^2)^2 - ( \widetilde{X}/{D})^2
  \quad\Longleftrightarrow\quad 0<({D}^2-\widetilde{Y})^2 -{D}^2\widetilde{X}^2 = 16\omega^6\frac{C_p}{C_s}\bar{\lambda}_1\bar{\lambda}_2 C^2,
$$
where $C \in\mathbb{R^*}$ is a complicated factor depending on $C_p$,
$C_s$, $\omega$, and $k$, and the other terms are positive.  We thus
conclude that in this case the algorithm is convergent.

If $k =\frac{\omega}{C_p}$ then $\lambda_{1} =
\mathrm{i}\frac{\omega\sqrt{C_p^2-C_s^2}}{C_s C_p}$ and
$\lambda_{2}=0$, and the elements in the matrix $B$ are
$$
b_{11}=
\frac{(C_p+C_s)(C_p^3-4C_pC_s^2+4C_s^3)-\sqrt{C_p^2-C_s^2} C_p^3}{(C_p+C_s)(C_p^3-4C_pC_s^2+4C_s^3) + \sqrt{C_p^2-C_s^2} C_p^3},\quad b_{12}=0,\quad b_{21}\in\mathbb{C}, \quad b_{22}=1,
$$
and the eigenvalues $r_{\pm}$ are given by
$$
r_+ = 1,\quad 
|r_-| = \left|\frac{\left(C_p+C_s\right)\left(C_p^3-4C_pC_s^2+4C_s^3\right) - \bar{\lambda}_1 C_p^4C_s}{\left(C_p+C_s\right)\left(C_p^3-4C_pC_s^2+4C_s^3\right) + \bar{\lambda}_1 C_p^4C_s}\right|^2.
$$
Since $C_p^3-4C_pC_s^2+4C_s^3>0$, we have $|r_-| < 1$, and thus $\rho_{T_0} = 1$.

If $k\in(\frac{\omega}{C_p},\frac{\omega}{C_s})$ then $\lambda_{1} \in
\mathrm{i}\mathbb{R_+}$ and $\lambda_{2} \in \mathbb{R_+}$, and we
obtain
$$
r_{\pm}=\left(\dfrac{\omega^3\left(\bar{\lambda}_1+\mathrm{i}\lambda_2\frac{C_p}{C_s}\right)\pm\sqrt{-\mathrm{i}\lambda_2\bar{\lambda}_1{K}^2-\left(\bar{Z_2}-\mathrm{i}{Z_1}\right)^2}}{\left(-{Z_1}+ \mathrm{i}\bar{Z_2}\right)-\omega^3\left(\bar{\lambda}_1-\mathrm{i}\lambda_2\frac{C_p}{C_s}\right)}\right)^2.
$$
By computing their modulus, we get
$$
\begin{aligned}
|r_{\pm}| &= \textstyle \frac{\left(\omega^3\frac{C_p}{C_s}\lambda_2 \mp \csgn(\alpha) \frac{\sqrt{2}}{2} \sqrt{ \sqrt{ \left({Z}_1^2-\bar{Z}_2^2\right)^2 + \left(K^2\lambda_2\bar{\lambda}_1-2{Z}_1\bar{Z}_2\right)^2} -{Z}_1^2 + \bar{Z}_2^2}\right)^2}{\left(\omega^3\frac{C_p}{C_s}\lambda_2 + \bar{Z_2}\right)^2+\left(\omega^3\bar{\lambda}_{1}+{Z_1}\right)^2} \\
&+ \textstyle\frac{\left(\omega^3\bar{\lambda}_1 \pm \frac{\sqrt{2}}{2} \sqrt{ \sqrt{ \left({Z}_1^2-\bar{Z}_2^2\right)^2 + \left(K^2 \lambda_2\bar{\lambda}_1-2{Z}_1\bar{Z}_2\right)^2}+{Z}_1^2-\bar{Z}_2^2}\right)^2}{\left(\omega^3\frac{C_p}{C_s}\lambda_2 + \bar{Z_2}\right)^2+\left(\omega^3\bar{\lambda}_{1}+{Z_1}\right)^2},
\end{aligned}
$$
where 
$$
\begin{aligned}
\alpha = \left(K^2\lambda_2\bar{\lambda}_1-2{Z}_1\bar{Z}_2+\mathrm{i}\left({Z}_1^2-\bar{Z}_2^2\right)\right), \quad \bar{Z}_2 = \left(4 C_s^3 k^2+ C_p\omega^2\right) \bar{\lambda}_1 \bar{\lambda}_2.
\end{aligned}
$$
An upper bound $M$ for the modulus of the eigenvalues is thus
obtained choosing the plus sign,
$$
\begin{aligned}
M :=  &\textstyle\frac{\left(\omega^3\frac{C_p}{C_s}\lambda_2+\frac{\sqrt{2}}{2} \sqrt{ \left( \left({Z}_1^2-\bar{Z}_2^2\right)^2 + \left(K^2\lambda_2\bar{\lambda}_1-2{Z}_1\bar{Z}_2\right)^2\right)^{\frac{1}{2}}-{Z}_1^2+\bar{Z}_2^2}\right)^2}{\left(\omega^3\frac{C_p}{C_s}\lambda_2+\bar{Z_2}\right)^2+\left(\omega^3\bar{\lambda}_{1}+{Z_1}\right)^2} \\
+ & \textstyle\frac{\left(\omega^3\bar{\lambda}_1+\frac{\sqrt{2}}{2} \sqrt{ \left( \left({Z}_1^2-\bar{Z}_2^2\right)^2+\left(K^2\lambda_2\bar{\lambda}_1-2{Z}_1\bar{Z}_2\right)^2\right)^{\frac{1}{2}}+{Z}_1^2-\bar{Z}_2^2}\right)^2}{\left(\omega^3\frac{C_p}{C_s}\lambda_2+\bar{Z_2}\right)^2+\left(\omega^3\bar{\lambda}_{1}+{Z_1}\right)^2},
\end{aligned}
$$
and it suffices to prove that $M<1$. \MG{To do so, it is sufficient
  to show that for the numerator in the first term
of $M$, we have}
\begin{equation}
\label{eq:M1}
0 < \omega^3\frac{C_p}{C_s}\lambda_2+\frac{\sqrt{2}}{2} \sqrt{ \left( \left({Z}_1^2-\bar{Z}_2^2\right)^2 + \left(K^2\lambda_2\bar{\lambda}_1-2{Z}_1\bar{Z}_2\right)^2\right)^{\frac{1}{2}}-{Z}_1^2+\bar{Z}_2^2}
< \omega^3\frac{C_p}{C_s}\lambda_2+\bar{Z_2},
\end{equation}
and for the numerator in the second term of $M$, we \MG{have}
\begin{equation}
\label{eq:M2}
0 < \omega^3\bar{\lambda}_1+\frac{\sqrt{2}}{2} \sqrt{ \left( ({Z}_1^2-\bar{Z}_2^2)^2 + (K^2 \lambda_2\bar{\lambda}_1-2{Z}_1\bar{Z}_2)^2\right)^{\frac{1}{2}}+{Z}_1^2-\bar{Z}_2^2}
< \omega^3\bar{\lambda}_{1}+{Z_1}.
\end{equation}
\MG{By a direct computation, one can show that} both \eqref{eq:M1} and \eqref{eq:M2} are equivalent to
$$
0<4{Z_1}\bar{Z_2}-{K}^2\lambda_2\bar{\lambda}_1=4\lambda_2\bar{\lambda}_1\frac{C_p}{C_s}\omega^6,
$$
which \MG{clearly holds, and thus}
$\max\left(|r_+|,|r_-|\right)\leq M<1$ and the algorithm is convergent.

If $k =\frac{\omega}{C_s}$ then $\lambda_{1} = 0$ and
$\lambda_{2}=\frac{\omega\sqrt{C_p^2-C_s^2}}{C_s C_p}>0$.  In this
case the coefficients of the matrix $B$ are
given by
$$
b_{11}= 1,\quad b_{12}\in\mathbb{C},\quad b_{21}=0, \quad b_{22}=\frac{ - \mathrm{i}\sqrt{C_p^2-C_s^2} - (C_p+C_s)}{\mathrm{i} \sqrt{C_p^2-C_s^2} - (C_p+C_s)}\MG{,}
$$
and the eigenvalues $r_{\pm}$ are
$$
r_+ = 1,\quad 
|r_-| = \left|\frac{ - \mathrm{i}\sqrt{C_p^2-C_s^2} - (C_p+C_s)}{\mathrm{i} \sqrt{C_p^2-C_s^2} - (C_p+C_s)}\right|^2=1, 
$$
and the algorithm therefore stagnates for $k=\frac{\omega}{C_s}$.

If $k \in \left(\frac{\omega}{Cs},\infty\right)$ then $\lambda_{1,2}
\in \mathbb{R^*_+}$ and \eqref{eq:rpm} gives $r_\pm = \dfrac{1}{D}(R
\pm \mathrm{i}I)$ with
\begin{equation}\label{R1I1case5L0}
\begin{aligned}
R &= \textstyle-K^2\lambda_1\lambda_2-\omega^6 \left(\lambda_1-\lambda_2\frac{C_p}{C_s}\right)^2 + \left(Z_1+Z_2\right)^2, \\
I &= \textstyle-2\omega^3 \left(\lambda_1-\lambda_2\frac{C_p}{C_s}\right) \sqrt{\left(Z_1+Z_2\right)^2 - K^2\lambda_1\lambda_2}.
\end{aligned}
\end{equation}
A direct computation shows that \MG{for
  $C(\omega,k,C_p,C_s)\in\mathbb{R}^*$ a constant}
$$
  R^2+I^2-|D|^2 = C(\omega,k,C_p,C_s) \left(K^2\lambda_1 \lambda_2
   - 4\left(Z_1Z_2-\omega^3\lambda_1\lambda_2 \frac{C_p}{C_s}\right)\right) =0,
$$
\MG{since $
K^2\lambda_1 \lambda_2 - 4Z_1Z_2+4\omega^3\lambda_1\lambda_2 \frac{C_p}{C_s}= 8 \lambda_1 \lambda_2 C_s^3\left(4C_s^3k^2+C_p\omega^2\right)
\left(k^2+\lambda_1^2 \right)(k^2-\lambda_1^2-\frac{\omega^2}{C_s^2})$, and
$k^2-\lambda_1^2-\frac{\omega^2}{C_s^2}=0$},
and hence 
$|r_{\pm}| = 1$ and the algorithm stagnates.
\end{proof}

The non-overlapping Schwarz algorithm with Taylor transmission
conditions thus leads to good convergence for low frequencies, but
stagnates for high frequencies. We now investigate if the combination
of overlap and TTC can lead to a convergent optimized Schwarz
algorithm. A first result for strictly positive overlap $\delta>0$
is the following, see also Figure \ref{Fig3b} for an illustration:

\begin{theorem}[Convergence of the overlapping Schwarz algorithm with TTC.]
\label{OverlappingSchwarzTTCTheorem}
  For $\delta>0$ small, the new overlapping Schwarz method
\eqref{GeneralOptimizedSchwarz} with Taylor transmission conditions
\eqref{eq:lowfreq} converges for
$$
\textstyle k \in (0,\frac{\omega}{C_p}) \cup (\frac{\omega}{C_p},\frac{\omega}{C_s})
\cup (k^*,\infty),\qquad k^*(\omega,C_p,C_s,\delta) \in (\frac{\omega}{C_s},\infty),
$$
diverges for \MG{$k \in (\frac{\omega}{C_s},k^*)$, and stagnates
  for $k\in\{\frac{\omega}{C_p},\frac{\omega}{C_s},k^*\}$}.
\label{OptTTCsL}
\end{theorem}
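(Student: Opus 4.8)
The plan is to reuse the machinery of Lemma~\ref{general_opti} together with the TTC symbols \eqref{eq:lowfreq}. At zero overlap the eigenvalues $r_\pm$ reduce exactly to \eqref{eq:rpm} with the matrix $B$ of \eqref{BLke0}--\eqref{eq:kd}, which is precisely what was analyzed in Theorem~\ref{OptTTCs}; the only effect of a positive overlap is to insert the factors $e^{-\lambda_1\delta}$ and $e^{-\lambda_2\delta}$ into $X$ and $Y$ as prescribed by \eqref{B1B2}. Since these factors are analytic in $\delta$, I would treat $r_\pm(\delta)$ as an analytic perturbation of the $\delta=0$ situation and expand to first order in the small overlap, splitting the analysis into the same five regimes for $k$ used in the proofs of Theorems~\ref{ClassicalSchwarzTheorem} and~\ref{OptTTCs}.

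Two of the regimes come essentially for free. On the open intervals $(0,\omega/C_p)$ and $(\omega/C_p,\omega/C_s)$, Theorem~\ref{OptTTCs} already gives the strict inequality $\max\{|r_+|,|r_-|\}<1$ at $\delta=0$; since $r_\pm$ depend continuously on $\delta$, this strict inequality persists for each fixed such $k$ once $\delta>0$ is small enough, and the method still converges there. At the two separating modes $k=\omega/C_p$ (where $\lambda_2=0$) and $k=\omega/C_s$ (where $\lambda_1=0$), the matrix $B$ is triangular with a unit diagonal entry, exactly as in Theorem~\ref{OptTTCs}. Writing $w$ for the remaining diagonal entry multiplied by its overlap factor, a short computation gives $X^2+4Y=(1+w)^2$, a perfect square, whence $r_+\equiv 1$ and $r_-=w^2$ for every $\delta$. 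Since the overlap factor $e^{-\lambda_j\delta}$ has modulus one when $\lambda_j\in\mathrm{i}\mathbb{R}$ and modulus less than one when $\lambda_j>0$, we have $|w|\le 1$, hence $\max\{|r_+|,|r_-|\}=1$: the method stagnates at these two modes for all $\delta$, as claimed.

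The heart of the proof is the last interval $k\in(\omega/C_s,\infty)$, where $\lambda_{1,2}\in\mathbb{R}^*_+$ and, by \eqref{R1I1case5L0} together with the identity $R^2+I^2=|D|^2$ established in Theorem~\ref{OptTTCs}, one has $|r_\pm|=1$ at $\delta=0$. Here the overlap is genuinely active, and I would compute the first-order expansion
$$
|r_\pm|^2 = 1 + 2\,\delta\,\mathrm{Re}\!\left(\overline{r_\pm(0)}\,\partial_\delta r_\pm(0)\right) + \mathcal{O}(\delta^2),
$$
so that the sign of the leading coefficient $c_\pm(k):=\mathrm{Re}\!\left(\overline{r_\pm(0)}\,\partial_\delta r_\pm(0)\right)$ decides convergence ($c<0$) or divergence ($c>0$) for $\delta$ small. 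Substituting the $\delta$-derivatives of $X$ and $Y$ (which bring down factors $-\lambda_j$ times the corresponding entries) and simplifying using $Z_1,Z_2,K,D$ from \eqref{eq:kd}, the relevant coefficient should reduce, after factoring out manifestly positive quantities, to the sign of a single polynomial in $k^2$. I expect this polynomial to be positive for $k$ just above $\omega/C_s$, giving divergence on $(\omega/C_s,k^*)$, and negative for $k$ large, consistent with $X,Y\to 0$ (hence $r_\pm\to 0$) as $k\to\infty$ through the decaying real exponentials; the threshold $k^*$ is then the root of this coefficient, at which the first-order correction vanishes and the method stagnates to leading order.

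The main obstacle is precisely this last step. Establishing the existence of $k^*$ is immediate from the intermediate value theorem once the two limiting signs are known, but the theorem asserts a \emph{single} threshold, so I must show the leading coefficient changes sign exactly once on $(\omega/C_s,\infty)$. This demands global control of the sign of a complicated rational/algebraic function of $k$, rather than a merely local expansion near the endpoints; the cleanest route is to reduce it to a polynomial in $k^2$ and prove that polynomial has a unique admissible root, for instance by exhibiting a monotone factor or by bounding the number of positive roots. Carrying the messy first-order algebra through while keeping the final sign transparent, and then pinning down the uniqueness of $k^*$, is the delicate part; the two easy intervals and the two separating modes, by contrast, follow directly from continuity and the triangular perfect-square structure already present at $\delta=0$.
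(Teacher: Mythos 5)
Your proposal follows essentially the same route as the paper: a first-order expansion of $r_\pm$ in the overlap $\delta$, the same five-case split in $k$, reduction to the non-overlapping result of Theorem~\ref{OptTTCs} on the two open intervals below $\frac{\omega}{C_s}$, the triangular/perfect-square structure at $k\in\{\frac{\omega}{C_p},\frac{\omega}{C_s}\}$ giving $r_+=1$ and a second eigenvalue of modulus at most one, and a sign analysis of the coefficient $R_{1\pm}R_{2\pm}+I_{1\pm}I_{2\pm}=\mathrm{Re}\bigl(\overline{r_\pm(0)}\,\partial_\delta r_\pm(0)\bigr)$ on $(\frac{\omega}{C_s},\infty)$. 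The one substantive step you leave as an expectation is the heart of the argument: the paper shows that $r_+$ is always contractive on the last interval, and that divergence can only come from $r_-$, whose first-order coefficient is positive exactly when $\tilde g:=2(Z_1+Z_2)-K(\lambda_1+\lambda_2)<0$; setting $k=\frac{\omega}{C_s}+\varepsilon$ and expanding in $\sqrt{\varepsilon}$, the leading constant is proportional to $(C_s+C_p)C_p-(C_p+2C_s)\sqrt{C_p^2-C_s^2}<0$, which yields divergence just above $\frac{\omega}{C_s}$. Note also that the uniqueness of the sign change that you (rightly) flag as delicate is not actually established in the paper either: the published proof only proves divergence in a neighborhood of $\frac{\omega}{C_s}$, uses $\lim_{k\to\infty}\rho_{T_0}=0$ (a consequence of the overlap, not of the first-order expansion), and then invokes continuity to produce $k^*$ with $\rho_{T_0}(k^*)=1$ and $\rho_{T_0}<1$ for $k>k^*$. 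So your plan is sound and faithful to the paper's strategy, but to be a complete proof you must carry out the sign computation near $\frac{\omega}{C_s}$ explicitly, and you may content yourself with the same local-plus-continuity argument for $k^*$ rather than proving a single sign change globally.
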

\begin{proof}
Again the proof is quite technical: the eigenvalues of the iteration
matrix are
\begin{equation}
r_\pm = \frac{X^2}{2}+ Y \pm \frac{1}{2} \sqrt{X^2\left(X^2+4Y\right)},\quad X = \expo^{-\lambda_1 \delta}b_{11} - \expo^{-\lambda_2 \delta}b_{22}, \,
Y = \frac{b_{11}b_{22}-b_{12}b_{21}}{\expo^{\lambda_1 \delta}\expo^{\lambda_2 \delta}},
\label{eq:rpm2}
\end{equation}
where the elements of the matrix $B$ are
$$
B = \begin{bmatrix}
b_{11}  & b_{12} \\
b_{21}  & b_{22}
\end{bmatrix}=
\frac{1}{D} \begin{bmatrix}
-Z_1-Z_2 - \mathrm{i} \omega^3 \left({\lambda}_{1}-{\lambda}_{2}\frac{C_p}{C_s}\right) & \mathrm{i} {\lambda}_{2} K \\
-\mathrm{i} {\lambda}_{1} K  & -Z_1-Z_2 +\mathrm{i}\omega^3\left({\lambda}_{1}-{\lambda}_{2}\frac{C_p}{C_s}\right)
\end{bmatrix}
$$
and $Z_1$, $Z_2$, $K$ and $D$ are given by
$$
\begin{array}{c}
Z_1= C_s^3\left(k^2+\lambda_1^2\right)^2+\omega^2 C_p k^2,\quad
Z_2= \left(4 C_s^3 k^2+ C_p\omega^2\right) \lambda_1 \lambda_2,\\[2ex]
K=2k\left(C_p\omega^2+2C_s^3\left(k^2+{\lambda}_{1}^2\right)\right),\quad
D=-Z_1 + Z_2 +\mathrm{i}\omega^3 \left({\lambda}_{1}+{\lambda}_{2}\frac{C_p}{C_s}\right).
\end{array}
$$
We define $\bar{\lambda}_j \in \mathbb{R_+}$, $j=1,2$, as in
(\ref{ImagLambda}) when ${\lambda}_1$ and/or ${\lambda}_2 \in
i\mathbb{R}$. When the overlap $\delta$ is small, a
series expansion of the eigenvalues gives
\begin{equation}
{r_\pm} = \left(R_{1\pm}+\mathrm{i}I_{1\pm}\right) + \left(R_{2\pm}+\mathrm{i}I_{2\pm}\right)\delta + \mathcal{O}(\delta^2),\quad (R_{j\pm},I_{j\pm})\in \mathbb{R},
\label{series}
\end{equation}
and the modulus of the eigenvalues becomes
$$
|{r_\pm}|^2 = \left(R_{1\pm}^2 + I_{1\pm}^2\right) + 2\delta\left(R_{1\pm}R_{2\pm}+I_{1\pm}I_{2\pm}\right) + \mathcal{O}(\delta^2).
$$
Again we need to distinguish several cases: if $k
\in(0,\frac{\omega}{C_p})$ then $\lambda_{1,2} \in
\mathrm{i} \mathbb{R_+}$ and $I_{1\pm}=R_{2\pm}=0$ for both
eigenvalues.  Therefore the series expansion (\ref{series}) becomes
$$
  {r_{\pm}} = R_{1\pm}+\mathrm{i}I_{2\pm}\delta+\mathcal{O}(\delta^2) \quad
    \Longrightarrow\quad |{r_{\pm}}|^2 = R_{1\pm}^2 + \mathcal{O}(\delta^2), 
$$
where 
$$
\begin{aligned}
 R_{1\pm}=\;& \textstyle\frac{\omega^6\left(\bar{\lambda}_1-\bar{\lambda}_2 \frac{C_p}{C_s}\right)^2 + ({Z}_1+{Z}_2)^2+4 k^2 \bar{\lambda}_1\bar{\lambda}_2 \left(4C_s^3 k^2+C_p\omega^2-2C_s\omega^2\right)^2}{\left({Z}_1-{Z}_2+\omega^3\left(\bar{\lambda}_1+\bar{\lambda}_2 \frac{C_p}{C_s}\right)\right)^2}\\
 &\textstyle\pm 2\omega^3 \left(\bar{\lambda}_1-\bar{\lambda}_2 \frac{C_p}{C_s}\right)\frac{\sqrt{\left({Z}_1+{Z}_2\right)^2+4 k^2 \bar{\lambda}_1\bar{\lambda}_2 \left(4C_s^3 k^2+C_p\omega^2-2C_s\omega^2\right)^2}}{\left({Z}_1-{Z}_2+\omega^3\left(\bar{\lambda}_1+\bar{\lambda}_2 \frac{C_p}{C_s}\right)\right)^2}.
\end{aligned}
$$
After simplifications, this gives exactly the same convergence factor
as in the non-overlapping case for which we have proved in Theorem
\ref{OptTTCs} that it is less
than one. Therefore the algorithm is convergent in this case for
$\delta>0$ small enough.

If $k =\frac{\omega}{C_p}$ then $\lambda_{1} =
\mathrm{i}\frac{\omega\sqrt{C_p^2-C_s^2}}{C_s C_p}$ and
$\lambda_{2}=0$. In this case the elements of the matrix $B$ are
$$
b_{11}=\textstyle\frac{(C_p+C_s)\left(C_p^3-4C_pC_s^2+4C_s^3\right)-\sqrt{C_p^2-C_s^2} C_p^3}{(C_p+C_s)\left(C_p^3-4C_pC_s^2+4C_s^3\right)+\sqrt{C_p^2-C_s^2}C_p^3},\ b_{12}=0,\ b_{21}\in\mathbb{C}, \ b_{22}=1,
$$
and the eigenvalues $r_{\pm}$ are
$$
r_+ = 1,\quad |r_-| = \left|\expo^{-2\mathrm{i}\bar{\lambda}_1 \delta}\frac{(C_p+C_s)(C_p^3-4C_pC_s^2+4C_s^3) - \bar{\lambda}_1 C_p^4C_s}{(C_p+C_s)(C_p^3-4C_pC_s^2+4C_s^3) + \bar{\lambda}_1 C_p^4C_s}\right|^2.
$$
Since $C_p^3-4C_pC_s^2+4C_s^3>0$, we have $|r_-| < 1$, and thus
$\rho_{T_0} = 1$ which means the algorithm stagnates in this case.

If $k \in (\frac{\omega}{C_p},\frac{\omega}{C_s})$, then
$\lambda_{1} \in \mathrm{i}\mathbb{R_+}$ and $\lambda_{2} \in
\mathbb{R_+}$.  The series expansion (\ref{series}) becomes
$$
|{r_\pm}|^2 = \left(R_{1\pm}^2 + I_{1\pm}^2\right) + \mathcal{O}(\delta), 
$$
and the terms $(R_{1\pm}+\mathrm{i}I_{1\pm})$ are the same as in the
non-overlapping case, and we already know from the proof
of Theorem \ref{OptTTCs} that $\left(R_{1\pm}^2 +
I_{1\pm}^2\right)<1$. Therefore the algorithm is convergent in this
case for $\delta>0$ small enough\footnote{From Figure \ref{Fig3b} we
  see that actually the overlap makes the algorithm faster in this
  interval, and even slightly faster also in the first interval.}.

If $k =\frac{\omega}{C_s}$ then 
$
\lambda_{1} = 0,\,\lambda_{2}=\frac{\omega\sqrt{C_p^2-C_s^2}}{C_s C_p}>0.
$
In this case the elements in the matrix $B$ are 
$$
  \textstyle b_{11}= 1,\quad b_{12}\in\mathbb{C},\quad b_{21}=0, \quad b_{22}=  \frac{ - \mathrm{i}\sqrt{C_p^2-C_s^2} - (C_p+C_s)}{\mathrm{i} \sqrt{C_p^2-C_s^2} - (C_p+C_s)},
$$
and the eigenvalues $r_{\pm}$ of the iteration matrix are given by
$$
r_+ = 1,\quad 
|r_-| = \expo^{-2\lambda_2\delta}\left|\frac{-\mathrm{i}\sqrt{C_p^2-C_s^2}-(C_p+C_s)}{\mathrm{i} \sqrt{C_p^2-C_s^2}-(C_p+C_s)}\right|^2=\expo^{-2\lambda_2\delta} < 1,
$$
which shows that the algorithm stagnates.

Finally, if $k \in (\frac{\omega}{C_s},\infty)$, then
$\lambda_{1,2}\in\mathbb{R^*_+}$ and the eigenvalues are given by
\eqref{eq:rpm2}.  We then use for $\delta>0$ small the series
expansion (\ref{series}) for $r_\pm$ and obtain
$$
R_{1\pm}+\mathrm{i}I_{1\pm}=\frac{1}{D}(R\pm\mathrm{i}I)\MG{,}
$$ 
where the values ($R,I,D$) are given in (\ref{R1I1case5L0}) and (\ref{eq:kd}).
Hence $R_{1\pm}^2+I_{1\pm}^2=1$ and
$$
\begin{aligned}
R_{1+}R_{2+}&+I_{1+}I_{2+}=-\frac{\left({\lambda}_{2}\lambda_1 K^2 - \omega^6\left(\lambda_1-\lambda_2\frac{C_p}{C_s}\right)^2 - \left(Z_1+Z_2\right)^2\right)^2}{|D|\sqrt{(Z_1+Z_2)^2 - {\lambda}_{2}\lambda_1 K^2}}\\
&\times \left(\sqrt{(Z_1+Z_2)^2-{\lambda}_{2}\lambda_1 K^2}(\lambda_1+\lambda_2) - (\lambda_1-\lambda_2)(Z_1+Z_2)\right) < 0, 
\end{aligned}
$$
since $(\lambda_1-\lambda_2)<0<(Z_1+Z_2)$. As the first eigenvalue is
less than one\MG{,}
$$
  r_+\sim 1+2\left(R_{1+}R_{2+}+I_{1+}I_{2+}\right)\delta <1,
$$
we will focus now on $r_-\sim 1+R_{1-}R_{2-}+I_{1-}I_{2-}=:F(k)$, with 
\begin{equation}
\begin{aligned}
F(k)&=-\frac{\left({\lambda}_{2}\lambda_1 K^2 - \omega^6\left(\lambda_1-\lambda_2\frac{C_p}{C_s}\right)^2 - (Z_1+Z_2)^2\right)^2}{|D|\sqrt{(Z_1+Z_2)^2 - {\lambda}_{2}\lambda_1 K^2}}\\
&\times \left(\underbrace{\sqrt{(Z_1+Z_2)^2 - {\lambda}_{2}\lambda_1 K^2}(\lambda_1+\lambda_2) + (\lambda_1-\lambda_2)(Z_1+Z_2)}_{g(k)}\right).
\label{eq:rm}
\end{aligned}
\end{equation}
Note that $\sqrt{(Z_1+Z_2)^2 - {\lambda}_{2}\lambda_1 K^2}\in\mathbb{R}$ since we have
\begin{equation}
\begin{aligned}
&(Z_1+Z_2)^2 - \lambda_{2}\lambda_1 K^2>0\quad \Longleftrightarrow\quad (Z_1+Z_2) - \sqrt{\lambda_{2}\lambda_1} K>0\\ 
&\Longleftrightarrow\quad (4C_s^3k^2+C_p\omega^2) t^2 - K t+Z_1>0, \quad t=\sqrt{\lambda_{2}\lambda_1},
\end{aligned}
\label{remark1}
\end{equation}
which \MG{holds because the discriminant}
$K^2-4(4C_s^3k^2+C_p\omega^2)Z_1=-\frac{4C_p\omega^6}{C_s}<0$.  So we
\MG{do not} have real solutions and the dominant term being positive, we
conclude this inequality holds \MG{for all} $k > \frac{\omega}{C_s}$.  We
can conclude that $g(k)\in\mathbb{R}$ as we have seen previously. We
now need to investigate under which conditions $g(k)<0$ which is
equivalent to $r_- >1$. \MG{By a direct calculation, we obtain}
$$
\begin{aligned}
g(k)<0\quad &\Longleftrightarrow\quad \sqrt{(Z_1+Z_2)^2 - {\lambda}_{2}\lambda_1 K^2}(\lambda_1+\lambda_2) < (\lambda_2-\lambda_1)(Z_1+Z_2)\\
&\Longleftrightarrow\quad \left((Z_1+Z_2)^2 - {\lambda}_{2}\lambda_1 K^2\right)(\lambda_1+\lambda_2)^2 < (\lambda_2-\lambda_1)^2(Z_1+Z_2)^2\\
&\Longleftrightarrow\quad \underbrace{2(Z_1+Z_2)-K(\lambda_1+\lambda_2)}_{\VD{\tilde g}}<0.
\end{aligned}
$$
We next study the sign of \VD{$\tilde g$} in a neighborhood of
$\frac{\omega}{C_s}$: we set
$k=\frac{\omega}{C_s}+\varepsilon$, and then expand \VD{$\tilde g$} in a series for
$\varepsilon$ small, which leads to
$$
\begin{aligned}
\VD{\tilde g} = &\textstyle\frac{2\omega^4}{C_pC_s^2} \left(\left(C_s+C_p\right)C_p-\left(C_p+2C_s\right)\sqrt{C_p^2-C_s^2}\right) \\
&- \textstyle\frac{2}{C_p}\sqrt{\frac{2\omega^7}{C_s^3}} \left(C_p\left(C_p+2C_s\right)-\left(C_p+4C_s\right)\sqrt{C_p^2-C_s^2}\right)\sqrt{\varepsilon} + \mathcal{O} (\varepsilon).
\end{aligned}
$$
For sufficiently small values of $\varepsilon$, that is for $k$ close
to $\frac{\omega}{C_s}$, the leading term of this series being
negative, we have $r_->1$ for $\delta>0$ small enough and the
algorithm diverges.  On the other hand, because of the overlap,
$\lim\limits_{k\to\infty}\rho_{T_0}(k,\omega,C_p,C_s,\delta)=0$ and by
continuity there exist two values $k^*>\bar{k}>\frac{\omega}{C_s}$
such that for all $k>k^*$ we have ${\rho_{T_0}}(k)<1$,
at $k=k^*$ we have $\rho_{T_0}(k^*,\omega,C_p,C_s,\delta)=1$, and
$|\rho_{T_0}(\bar{k})|>1$ with
$\bar{k}=\mbox{argmax}_{k>\frac{\omega}{C_s}}|\rho_{T_0}|\in(\frac{\omega}{C_s},k^*)$, which
concludes the proof for small overlap $\delta>0$.
\end{proof}

It is possible to obtain an asymptotic estimate for $k^*$ and also the
rate at which the method diverges for the frequency $k=\bar{k}$, see
the PhD thesis \cite[pp. 45 ff]{BrunetPhD2018}. We focus however next
on how to obtain a convergent algorithm. The results of Theorem
\ref{OptTTCsL} hold for overlap $\delta>0$ small enough: if the
overlap is bigger, it is possible to obtain a convergent optimized
Schwarz method except for the two isolated frequencies
$k=\frac{\omega}{C_p}$ and $k=\frac{\omega}{C_s}$, as indicated in
Figure \ref{Fig3b} for $\delta=0.8$, where the bump in the convergence
factor making it larger than one has disappeared. In the Helmholtz
case, there is also one isolated frequency which is not convergent
when using an optimized Schwarz method
\cite{Gander:2001:OSH,gander2007optimized,gander2016optimized}, and
such isolated cases can be left to Krylov acceleration. We are
therefore interested in estimating the value
$\delta^*(C_p,C_s,\omega)$ for which the optimized Schwarz method with
Taylor transmission conditions converges as soon as the overlap
$\delta>\delta^*(C_p,C_s,\omega)$ like illustrated in Figure
\ref{Fig5},
\begin{figure}
  \centering
  \includegraphics[width=0.3\textwidth]{./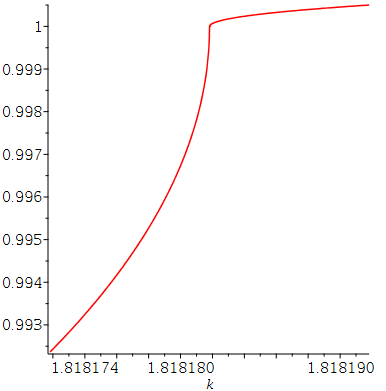}
  \includegraphics[width=0.3\textwidth]{./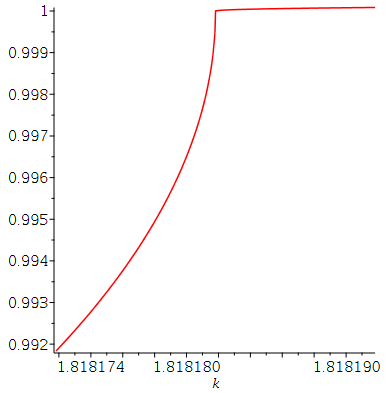}
  \includegraphics[width=0.3\textwidth]{./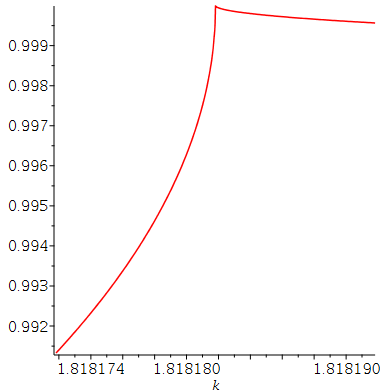}
  \caption{Convergence factor $\rho_{T_0}$ close to
    $k=\frac{\omega}{C_s}$ for $C_p=1$, $C_s=\frac12$ and $\omega=1$.
    Left: $\delta=0.8$ (divergence). Middle: $\delta=0.9$
    (approximate stagnation). Right: $\delta=1$ (convergence).}
  \label{Fig5}
\end{figure}
where we see with a zoom that $\delta=0.8$ is not quite enough for
convergence, but $\delta=1$ is.
\begin{theorem}
  The new overlapping Schwarz algorithm \eqref{GeneralOptimizedSchwarz}
  with Taylor transmission conditions \eqref{eq:lowfreq} converges for
  $k\in\mathbb{R}_+ \:\backslash\:
  \left\{\frac{\omega}{C_p},\frac{\omega}{C_s}\right\}$ if the overlap
  $\delta$ is bigger than
  $$
    {\delta}^*(C_p,C_s,\omega) =
      \frac{C_s\sqrt{C_p^2-C_s^2}(C_p+2C_s)^2}{C_p\omega(C_s+C_p)}
      \frac{\sinh(\alpha)}{C_p\cosh(\alpha) +C_s},
  $$
  where $\alpha$ is the positive root of
  $$
  \alpha C_p^2\left(C_p\cosh(\alpha)+C_s\right) -
  \left(C_p^3+\VD{(\alpha-1)(}3C_p^2C_s-4C_s^3\VD{)}\right)\sinh(\alpha)= 0.
  $$
\end{theorem}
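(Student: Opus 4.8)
The plan is to reduce the statement to a single frequency interval and then to pin down the threshold $\delta^*$ by a tangency (equioscillation) condition. By Theorem~\ref{OptTTCs} (non‑overlapping) and the case analysis in the proof of Theorem~\ref{OptTTCsL} (small overlap), $\rho_{T_0}<1$ on $(0,\frac{\omega}{C_p})\cup(\frac{\omega}{C_p},\frac{\omega}{C_s})$ while $\rho_{T_0}=1$ at the two isolated points $k\in\{\frac{\omega}{C_p},\frac{\omega}{C_s}\}$. I would first verify that these statements persist for \emph{all} $\delta\ge0$ by re-running the same case computations without the small-$\delta$ restriction (the genuinely new content is not here). The only place where the overlap can be binding is the last interval $k\in(\frac{\omega}{C_s},\infty)$, where $\lambda_{1,2}\in\mathbb{R}^*_+$; there $\rho_{T_0}\to0$ as $k\to\infty$ and $\rho_{T_0}\to1$ as $k\to(\frac{\omega}{C_s})^+$, so any region with $\rho_{T_0}\ge1$ is a bounded ``bump'' attached to $\frac{\omega}{C_s}$, and the whole task is to show that this bump is pushed strictly below $1$ once $\delta>\delta^*$.

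I would work with $G(k,\delta):=\max\{|r_+|,|r_-|\}$, so that $\rho_{T_0}=G^{1/2}$ and $\rho_{T_0}<1\Leftrightarrow G<1$. From the structure of \eqref{eq:rpm2} one has $r_+r_-=Y^2=\expo^{-2(\lambda_1+\lambda_2)\delta}(\det B)^2$ and $r_++r_-=X^2+2Y$, and the proof of Theorem~\ref{OptTTCsL} already shows that on this interval $r_+<1$ while $r_-$ is the eigenvalue that can exceed $1$; since $Y\to0$ as $\delta\to\infty$, near the bump $G=|r_-|$. The only $\delta$-dependence sits in the decaying real exponentials $\expo^{-\lambda_1\delta},\expo^{-\lambda_2\delta}$, which I would use to establish that $G(k,\cdot)$ is decreasing in $\delta$ for each fixed $k>\frac{\omega}{C_s}$. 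The worst-frequency threshold is then well defined, and at $\delta=\delta^*$ the bump just touches the value $1$: there is a critical frequency $\bar k\in(\frac{\omega}{C_s},\infty)$ at which the two tangency conditions
$$
|r_-(\bar k,\delta^*)|=1,\qquad \partial_k|r_-(\bar k,\delta^*)|=0
$$
hold simultaneously. These are two real equations in the two unknowns $(\bar k,\delta^*)$.

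To solve them I would substitute the closed forms of $b_{11},b_{22},b_{12}b_{21}$ and of $Z_1,Z_2,K,D$ from \eqref{BLke0}--\eqref{eq:kd} into $|r_-|^2$, and then introduce the scaled overlap $\alpha$ (a product of a decay rate and $\delta$ evaluated at $\bar k$) as the single convenient unknown; because $\lambda_2^2-\lambda_1^2=\frac{\omega^2(C_p^2-C_s^2)}{C_s^2C_p^2}$ is a fixed constant along this interval, the exponentials $\expo^{-\lambda_j\delta}$ reassemble into $\cosh\alpha$ and $\sinh\alpha$. The first condition $|r_-|=1$ should then yield, after simplification and elimination of $\bar k$, the explicit relation $\delta^*=\frac{C_s\sqrt{C_p^2-C_s^2}(C_p+2C_s)^2}{C_p\omega(C_s+C_p)}\,\frac{\sinh\alpha}{C_p\cosh\alpha+C_s}$. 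Inserting this back into the slope condition $\partial_k|r_-|=0$—where the chain rule $\partial_k$ acting on $\expo^{-\lambda_j\delta}$ is precisely what produces the terms linear in $\alpha$—should collapse to the stated transcendental equation $\alpha C_p^2(C_p\cosh\alpha+C_s)-(C_p^3+(\alpha-1)(3C_p^2C_s-4C_s^3))\sinh\alpha=0$. I would finish by showing that this equation has a unique positive root (sign analysis of the left-hand side at $\alpha=0^+$ and as $\alpha\to\infty$, plus monotonicity), and that for $\delta>\delta^*$ the decay of $G$ in $\delta$ forces $G<1$ throughout $(\frac{\omega}{C_s},\infty)$, which together with the first paragraph gives the claim.

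The main obstacle is the elimination in the last step: carrying $|r_-|^2$ in fully simplified closed form, differentiating it in $k$, and checking that the two transcendental conditions really decouple into the single explicit $\delta^*(\alpha)$ together with the one transcendental equation for $\alpha$. The two supporting facts—that $G(k,\cdot)$ is monotone in $\delta$ (so the tangency point is genuinely the threshold rather than an arbitrary critical point), and that the positive root $\alpha$ is unique—are where the analysis must be done with care; everything else is bookkeeping with the explicit symbols in \eqref{BLke0}--\eqref{eq:kd}.
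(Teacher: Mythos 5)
Your reduction to the interval $k\in(\tfrac{\omega}{C_s},\infty)$ and the identification of $r_-$ as the problematic eigenvalue agree with the paper, but the mechanism you propose for pinning down $\delta^*$ is not the one that operates here. The divergence bump is anchored at the endpoint $k=\tfrac{\omega}{C_s}$, where $\lambda_1=0$, $r_+=1$ and $|r_-|=\expo^{-2\lambda_2\delta}$, so the convergence factor equals $1$ there for \emph{every} $\delta$; moreover, since $\lambda_1\sim\sqrt{\varepsilon}$ with $\varepsilon=k-\tfrac{\omega}{C_s}$, the expansion of $|r_\pm|^2$ about this point proceeds in powers of $\sqrt{\varepsilon}$ and the $k$-derivative is singular at the endpoint. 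The paper's threshold is the value of $\delta$ at which the coefficient of $\sqrt{\varepsilon}$ in $|r_-|^2$, equal to $-c\,f(\delta)$ with $c>0$ and an explicit $f$ satisfying $f(0)=0$, changes sign: $\delta^*$ is the unique positive zero of the \emph{single} scalar equation $f(\delta)=0$. The quantity $\alpha$ is not obtained by eliminating an interior critical frequency $\bar k$; it is simply $\lambda_2\delta$ evaluated at $k=\tfrac{\omega}{C_s}$, i.e. $\alpha=\tfrac{\omega\sqrt{C_p^2-C_s^2}}{C_pC_s}\delta$, and substituting this change of variable into $f(\delta)=0$ yields simultaneously the closed form of $\delta^*$ and the transcendental equation for $\alpha$. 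Your tangency system $|r_-(\bar k,\delta^*)|=1$, $\partial_k|r_-(\bar k,\delta^*)|=0$ at an interior $\bar k$ does not capture the threshold: as $\delta$ increases to $\delta^*$ the maximizer of the bump collapses onto the endpoint, where the slope condition is meaningless because of the square-root behavior, so your elimination would not produce the stated equations.

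The second supporting fact you rely on is false: $G(k,\cdot)$ is \emph{not} decreasing in $\delta$ for fixed $k>\tfrac{\omega}{C_s}$. By Theorem \ref{OptTTCs} the factor equals $1$ on all of $[\tfrac{\omega}{C_s},\infty)$ at $\delta=0$, and by Theorem \ref{OptTTCsL} it exceeds $1$ near $\tfrac{\omega}{C_s}$ for small $\delta>0$, so it initially increases with $\delta$. Consequently the final step, ``the decay of $G$ in $\delta$ forces $G<1$ for all $\delta>\delta^*$,'' does not follow from your argument. (The paper instead studies the sign, monotonicity and concavity of the scalar function $f(\delta)$ itself to show it has a unique positive zero and keeps the favorable sign beyond it; it never invokes monotonicity of the convergence factor in $\delta$.) Your first paragraph --- persistence of convergence on $(0,\tfrac{\omega}{C_s})\backslash\{\tfrac{\omega}{C_p}\}$ and stagnation at the two exceptional frequencies for all $\delta$ --- is consistent with what the paper does, but the core of the proof, namely the determination of $\delta^*$, rests on a different and incorrect ansatz in your proposal.
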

\begin{proof}
As illustrated in Figure \ref{Fig5}, we need to investigate how the
convergent algorithm turns into a divergent one when $\delta$ is
decreased. \MG{For $k \in(0,\frac{\omega}{C_s})\backslash
  \{\frac{\omega}{C_p}\}$, the Schwarz algorithm with absorbing
  boundary conditions converges both without overlap (see Theorem
  \ref{OptTTCs}) and with a small overlap (see Theorem
  \ref{OptTTCsL}), and a bigger overlap only improves the behavior,
  so divergence does not happen for those values of $k$. If
  $k\in\{\frac{\omega}{C_p},\frac{\omega}{C_s}\}$, we know that the
  convergence factor is independent of the size $\delta$ of the
  overlap and always equals $1$, so the algorithm stagnates there.
  Only if $k \in(\frac{\omega}{C_s},\infty)$, the algorithm could
  diverge, and we thus} need to study the slope of the eigenvalues of
the iteration matrix \MG{at} $\frac{\omega}{Cs}$ coming from the
right\MG{, see Figure \ref{Fig5}. To do so,} we set
$k:=\frac{\omega}{C_s}+\varepsilon$ for $\varepsilon$ a parameter and
expand $r_\pm$ in a series as in (\ref{series}) for $\varepsilon$
small, with $\left(R_{j\pm},I_{j\pm}\right)\in \mathbb{R}$, and then
obtain for the modulus of the eigenvalues
$$
\begin{aligned}
|{r_\pm}|^2 &= \left(R_{1\pm}^2 + I_{1\pm}^2\right) + 2\sqrt{\varepsilon}\left(R_{1\pm}R_{2\pm}+I_{1\pm}I_{2\pm}\right) + \mathcal{O}(\varepsilon).
\end{aligned}
$$
For $r_+$, we obtain for the first term that
$$
  \textstyle R_{1+}+\mathrm{i}I_{1+} =-\frac{C_p^2-2C_s^2  - \mathrm{i}2C_s\sqrt{C_p^2-C_s^2}}{C_p^2\expo^{\frac{2\omega\sqrt{C_p^2-C_s^2}}{C_pC_s}\delta}} \quad\Longrightarrow\quad  R_{1+}^2 + I_{1+}^2 = \expo^{-\frac{4\omega\sqrt{C_p^2-C_s^2}}{C_sC_p}\delta}<1,
$$
and similarly for $r_-$ we get $R_{1-}+\mathrm{i}I_{1-}=1
\;\Longrightarrow\; R_{1-}^2 + I_{1-}^2=1$. For the second term, we get
$$
  \textstyle R_{1+}R_{2+}+I_{1+}I_{2+} = -\frac{ 2\sqrt{2C_s} \expo^{-\frac{4\omega\sqrt{C_p^2-C_s^2}}{C_sC_p}\delta} \sqrt{C_p^2-C_s^2}\left(C_p+2C_s\right)^2  \left(\expo^{\frac{2\omega\sqrt{C_p^2-C_s^2}}{C_sC_p}\delta}-1\right) }{C_p\sqrt{\omega}(C_p+C_s) \left(C_p\expo^{\frac{2\omega\sqrt{C_p^2-C_s^2}}{C_sC_p}\delta} + 2C_s\expo^{\frac{\omega\sqrt{C_p^2-C_s^2}}{C_sC_p}\delta} +C_p \right)}<0,
$$
from which we can conclude that $|{r_+}|^2 < 1$. For the second
eigenvalue, we get however
$$
\begin{array}{l}
R_{1-}R_{2-}+I_{1-}I_{2-} = \\
-\frac{2\sqrt{2}(\omega C_s)^{-\frac{1}{2}}}{(C_p+C_s)C_p} \underbrace{\left(\textstyle \delta C_p\omega(C_p+C_s) - \frac{C_s(C_p+2C_s)^2\sqrt{C_p^2-C_s^2} \left(\expo^{\frac{2\omega\sqrt{C_p^2-C_s^2}}{C_sC_p}\delta}-1\right)}{ C_p\expo^{\frac{2\omega\sqrt{C_p^2-C_s^2}}{C_sC_p}\delta} + 2C_s\expo^{\frac{\omega\sqrt{C_p^2-C_s^2}}{C_sC_p}\delta} +C_p}\right)}_{=:f(\delta)}.\\
\end{array}
$$
We therefore need to study the function $f$ to investigate for which
values of $\delta$ it is becoming negative, which means the algorithm
will diverge. Computing the derivative, we obtain
\begin{equation}
f'(\delta) = -\frac{2\sqrt{2\omega}\expo^{\frac{2\omega\sqrt{C_p^2-C_s^2}}{C_sC_p}\delta}}{\sqrt{C_s}C_p^2 \left( C_p \expo^{\frac{2\omega\sqrt{C_p^2-C_s^2}}{C_sC_p}\delta} + 2C_s\expo^{\frac{\omega\sqrt{C_p^2-C_s^2}}{C_sC_p}\delta} +C_p \right)^2}g(\delta),
\end{equation}
so the sign of $f'$ is the opposite sign of $g$ given by
$$
\begin{aligned}
g(\delta) = \;&2C_p^4 \cosh\left( \frac{2\omega}{C_sC_p}\sqrt{C_p^2-C_s^2}\delta\right) - 2C_p(C_p^3+6C_p^2C_s-2C_pC_s^2-8C_s^3) \\
&+ 4C_s(C_p+C_s)(C_p-2C_s)^2 \cosh\left( \frac{\omega}{C_sC_p}\sqrt{C_p^2-C_s^2}\delta\right).
\end{aligned}
$$
Computing the derivative of $g$, we find
$$
\begin{aligned}
 g'(\delta) = \;&\frac{4}{C_p} \omega (C_p+C_s)(C_p-2C_s)^2 \sqrt{C_p^2-C_s^2}\sinh\left( \frac{\omega}{C_sC_p}\sqrt{C_p^2-C_s^2}\delta\right) \\
 &+ \frac{4}{C_s} \omega \sqrt{C_p^2-C_s^2} \sinh\left( \frac{2\omega}{C_sC_p}\sqrt{C_p^2-C_s^2}\delta\right),
\end{aligned}
$$ and we have $g(0) = -8C_s(C_p+C_s)(C_p^2-2C_s^2)$. This shows that
$g'(\delta)>0$ for all $\delta>0$ and $g(0)<0$, since $C_p$ and $C_s$
are positive and $C_p^2>2C_s^2$, see \eqref{lambda12}.  Now $\cosh$ is
a strictly increasing function for positive arguments, and in our case
all the parameters are real and positive, and for $\delta=0$ we have
$\cosh(0)=1$. We therefore have
$$
\cosh\left(\frac{\omega}{C_sC_p}\sqrt{C_p^2-C_s^2} {\delta}\right)\leq\cosh\left(2\frac{\omega}{C_sC_p}\sqrt{C_p^2-C_s^2} {\delta}\right),
$$
and can thus estimate $g$ from below, 
\begin{equation}\label{LowBnd}
\begin{aligned}
g({\delta}) \geq \; &2C_p^4 \cosh\left( \frac{\omega}{C_sC_p}\sqrt{C_p^2-C_s^2}\delta\right) - 2C_p(C_p^3+6C_p^2C_s-2C_pC_s^2-8C_s^3) \\
 &+ 4C_s(C_p+C_s)(C_p-2C_s)^2 \cosh\left( \frac{\omega}{C_sC_p}\sqrt{C_p^2-C_s^2}\delta\right).
\end{aligned}
\end{equation}
Let $\bar{\delta}\in\mathbb{R}_+^*$ be the unique value of $\delta$
such that $\cosh\left(\frac{\omega}{C_sC_p}\sqrt{C_p^2-C_s^2}
\bar{\delta}\right)=3$; then we obtain from \eqref{LowBnd} the lower
bound
$$
\begin{aligned}
g(\bar{\delta}) &\geq 2C_p^4 \times 3 - 2C_p(C_p^3+6C_p^2C_s-2C_pC_s^2-8C_s^3) + 4C_s(C_p+C_s)(C_p-2C_s)^2 \times 3\\
&=16 C_p^2(C_p^2-2C_s^2) + 16C_pC_s^3 +48 C_s^4>0.
\end{aligned}
$$
Since $g(0)<0$ there exists by continuity a
$\hat{\delta}\in(0,\bar{\delta})$ s.t. $g(\hat{\delta})=0$ and we know
that $g$ is an increasing function.  This implies, because $f(0)=0$
that $f$ is a strictly increasing function
for $\delta<\hat{\delta}$, and a strictly decreasing
function for $\delta>\hat{\delta}$, and by a direct calculation, we
find for the second derivative
$$
\begin{aligned}
&\textstyle f''(\delta) = - \frac{4 \sqrt{2\omega^3(C_p^2-C_s^2)} (C_p+2C_s)^2 (C_p-C_s) \expo^{\frac{\omega\sqrt{C_p^2-C_s^2}}{C_sC_p}\delta}}{ (\sqrt{C_s}C_p)^3 \left(C_p\expo^{\frac{2\omega\sqrt{C_p^2-C_s^2}}{C_sC_p}\delta} + 2C_s\expo^{\frac{\omega\sqrt{C_p^2-C_s^2}}{C_sC_p}\delta} +C_p\right)^3} \\
&\textstyle\times \left[ \left(\expo^{\frac{4\omega\sqrt{C_p^2-C_s^2}}{C_sC_p}\delta}-1 \right) C_pC_s + 2(2C_p^2-C_s^2)\expo^{\frac{\omega\sqrt{C_p^2-C_s^2}}{C_sC_p}\delta} \left(\expo^{\frac{2\omega\sqrt{C_p^2-C_s^2}}{C_sC_p}\delta}-1 \right) \right]<0,
\end{aligned}
$$
therefore $\hat{\delta}$ is the absolute maximum for $f$.  Since
$\lim_{\delta\to\infty}f(\delta)=-\infty$, its graph will
cut the x-axis only once. By solving the equation $f(\delta)=0$
w.r.t. $\delta$ we find
$$
\begin{aligned}
{\delta}^*(C_p,C_s,\omega) &= \frac{C_s\sqrt{C_p^2-C_s^2}(C_p+2C_s)^2}{C_p\omega(C_s+C_p)} \frac{\expo^{2\alpha}-1}{C_p\expo^{2\alpha}+2\expo^{\alpha}C_s+C_p}\\
&= \dfrac{C_s\sqrt{C_p^2-C_s^2}(C_p+2C_s)^2}{C_p\omega(C_s+C_p)}\dfrac{\sinh(\alpha)}{C_p\cosh(\alpha) +C_s},
\end{aligned}
$$
where $\alpha$ the positive root of
$$
\begin{aligned}
0 &= \left[ (\alpha-1)\left(C_p^3-3C_p^2C_s+4C_s^3\right) \expo^{2\alpha}+2\alpha C_p^2C_s\expo^{\alpha}+(\alpha+1)\left(C_p^3+3C_p^2C_s-4C_s^3\right)\right]\\
&\Longleftrightarrow 
  \alpha C_p^2\left(C_p\cosh(\alpha)+C_s\right) =
  \left(C_p^3+\VD{(\alpha-1)(}3C_p^2C_s-4C_s^3\VD{)}\right)\sinh(\alpha).
\end{aligned}
$$
Note that $\alpha=0$ is also a solution but since ${\delta}^*>0$ we must have $\alpha>0$.
\end{proof}

\section{Numerical results}\label{sec:numerical}

\MG{We discretize} the Navier equations \MG{by} a finite element
method \MG{using a triangulation $\mathcal{T}_h$ of the computational
  domain $\Omega$, and obtain} a linear system $A \boldsymbol{u} =
\boldsymbol{b}$ \MG{to solve. To present} the \MG{discretized} Schwarz
method\MG{s, l}et $\{\mathcal{T}_{h,i}\}_{i=1}^N$ be a non-overlapping
partition of \MG{the triangulation $\mathcal{T}_h$, obtained} by using
a mesh partitioner like METIS~\cite{Karypis:98:METIS}. The overlapping
partition needed in \MG{the Schwarz methods} is defined as
follows\MG{: f}or an integer value $l \geq 0$, we build the
decomposition $\{\mathcal{T}_{h,i}^{l}\}_{i=1}^N$ such that
$\mathcal{T}_{h,i}^{l}$ is \MG{the} set of all triangles from
$\mathcal{T}_{h,i}^{l-1}$ and all triangles from $\mathcal{T}_h
\setminus \mathcal{T}_{h,i}^{l-1}$ that have non-empty intersection
with $\mathcal{T}_{h,i}^{l-1}$, and $\mathcal{T}_{h,i}^0 =
\mathcal{T}_{h,i}$.  With this definition the width of the overlap
\MG{is $2l$ mesh layers}. \MG{We denote by} $W_h$ the finite element
space associated with $\mathcal{T}_h$, \MG{and by} $W_{h,i}^{l}$ the
local finite element spaces on $\mathcal{T}_{h,i}^{l}$, \MG{which
  form} a triangulation of $\Omega_i$. Let $\mathcal{N}$ be the set of
indices of degrees of freedom of the global finite element space $W_h$
and $\mathcal{N}_i^{l}$ the set of indices of degrees of freedom of
the local finite element spaces $W_{h,i}^{l}$ for $l \geq 0$.  We
define the restriction operators from the global set of degrees of
freedom to the local one by $ {R}_i: W_h \rightarrow W_{h,i}^{l}$.  At
\MG{the} discrete level this is a rectangular matrix
$|\mathcal{N}_i^{l}| \times |\mathcal{N}|$ \MG{containing zeros and
  ones} such that if $\boldsymbol{v}$ is the vector of degrees of
freedom of $v_h \in W_h$, then ${R}_i \boldsymbol{v}$ is the vector of
degrees of freedom of $W_h$ in $\Omega_i$. The extension operator from
$W_{h,i}^{l}$ to $W_h$ and its associated matrix are then given by
$R_i^T$.  In addition we introduce a partition of unity $D_i$ as a
diagonal matrix $|\mathcal{N}_i^{l}| \times |\mathcal{N}_i^{l}|$ such
that
\begin{equation}
  \MG{I} = \sum_{i=1}^N R_i^T D_i R_i,
\end{equation}
where $\MG{I} \in \mathbb{R}^{|\mathcal{N}| \times |\mathcal{N}|}$ is
the identity matrix. With these ingredients we can now present the
\MG{Restricted Additive Schwarz} (RAS) preconditioner as described in
\cite[Chapter~1.4]{Dolean:15:DDM}\MG{,}
\begin{equation}\label{eq:dd_RAS}
  M^{-1}_{RAS} = \sum_{i=1}^N R_i^T D_i \left(R_i A R_i^T\right)^{-1} R_i.
\end{equation}
In our experiments we will also use the Optimized RAS (ORAS)
preconditioner which is based on local boundary value problem\MG{s}
with absorbing boundary conditions. In this case, let ${B}_i$ be the
matrix associated to a discretization of the corresponding local
\MG{problems} on the \MG{sub}domains $\Omega_i$ with \MG{absorbing}
boundary conditions on $\partial\Omega_i\cap \partial\Omega_j$. The
definition \MG{of the preconditioner is then} very similar to
\eqref{eq:dd_RAS} except that $R_i A R_i^T$ is replaced by
${B_i}$\MG{,}
\begin{equation}\label{eq:dd_ORAS}
  M^{-1}_{ORAS} = \sum_{i=1}^N R_i^T {D_i B_i}^{-1} R_i.
\end{equation}
It has been shown \MG{in \cite{gander2008schwarz} that the discretized
  parallel} Schwarz algorithm is equivalent to the stationary
iteration
\begin{equation}\label{eq:iterSchwarz}
\boldsymbol{u}^{n+1} = \boldsymbol{u}^n + M^{-1}\left(\boldsymbol{b} - A\boldsymbol{u}^n\right),
\end{equation}
where \MG{the preconditioner} $M^{-1}$ can \MG{either be}
$M^{-1}_{RAS}$ from \eqref{eq:dd_RAS} or $M^{-1}_{ORAS}$ \MG{from}
\eqref{eq:dd_ORAS}\MG{; see \cite{Stcyr:07:OMA} for the precise result
  for the latter which contains an algebraic condition. For more
  information on the influence of the partition of unity, see
  \cite{Gander:2019:DTP}.}

\MG{The stationary iteration \eqref{eq:iterSchwarz} can be accelerated
  using a Krylov method, which is equivalent to solving} the
preconditioned system
\begin{equation}
\label{eq:precSchwarz}
M^{-1}A \mathbf{u} = M^{-1}\mathbf{b}
\end{equation}
\MG{using the} Krylov method\MG{, see
  e.g. \cite[Chapter~3]{Dolean:15:DDM}. We test our new Schwarz
  methods both as stationary iterations and as preconditioners for a
  Krylov method.} \VD{In all the following test cases, we use as} stopping
criteri\MG{on} the relative $L^2$ norm of \MG{the} error,
$$
  \frac{\|\boldsymbol{u} -
  \boldsymbol{u}_n\|_{L^2(\Omega)}}{\|\boldsymbol{u} -
  \boldsymbol{u}_0\|_{L^2(\Omega)}} < 10^{-6},
$$ 
where $\boldsymbol{u}$ is the \MG{mono-}domain solution and
$\boldsymbol{u}_m$ denotes the approximation of $\boldsymbol{u}$ at
the $m$-th iteration of the iterative solver.
\VD{Note that }\MG{when using Krylov
  acceleration}, \VD{we can also use} \MG{the relative residual to stop the iteration,
  which is also available when the solution $\boldsymbol{u}$ is not
  known.}

We use a zero initial guess\footnote{\MG{When studying optimized
    parameters, starting with a zero initial guess is not advisable,
    see \cite[end of subsection 5.1]{gander2008schwarz}.}}  in all
\MG{our} tests, and we vary the size of the overlap and the type of
the decomposition (uniform or using METIS).  Numerical simulations
were done using the open source software Freefem++
\cite{Hecht:2012:ff++}, which is a high level language \MG{for the}
variational discretization of partial differential equations.

\subsection{Two-subdomain case: optimized Schwarz with TTC}

\MG{We first illustrate Theorem \ref{OverlappingSchwarzTTCTheorem}
  which states that the optimized} Schwarz algorithm with Taylor
transmission conditions can \MG{have converge problems for frequencies
  $k$ slightly bigger than $\frac{\omega}{C_s}$ if the overlap is not
  big enough. We} use the parameters $C_p=1$, $C_s=\frac{1}{2}$,
$\rho=1$, the domain $\Omega=(-1,1)\times(0,1)$ \MG{with Dirichlet
  conditions on top and bottom, and absorbing boundary conditions on
  the left and right, and the two subdomains
  $\Omega_1=(-1,\MG{\delta})\times(0,1)$ and
  $\Omega_2=(-\MG{\delta},1)\times(0,1)$. We discretize the
  time-harmonic Navier equations using uniform P1 finite elements
  with mesh size $h=\frac{1}{80}$.}
We show in Figure \ref{FigNumExp1}
\begin{figure}
  \centering
 \mbox{\includegraphics[width=0.55\textwidth,clip]{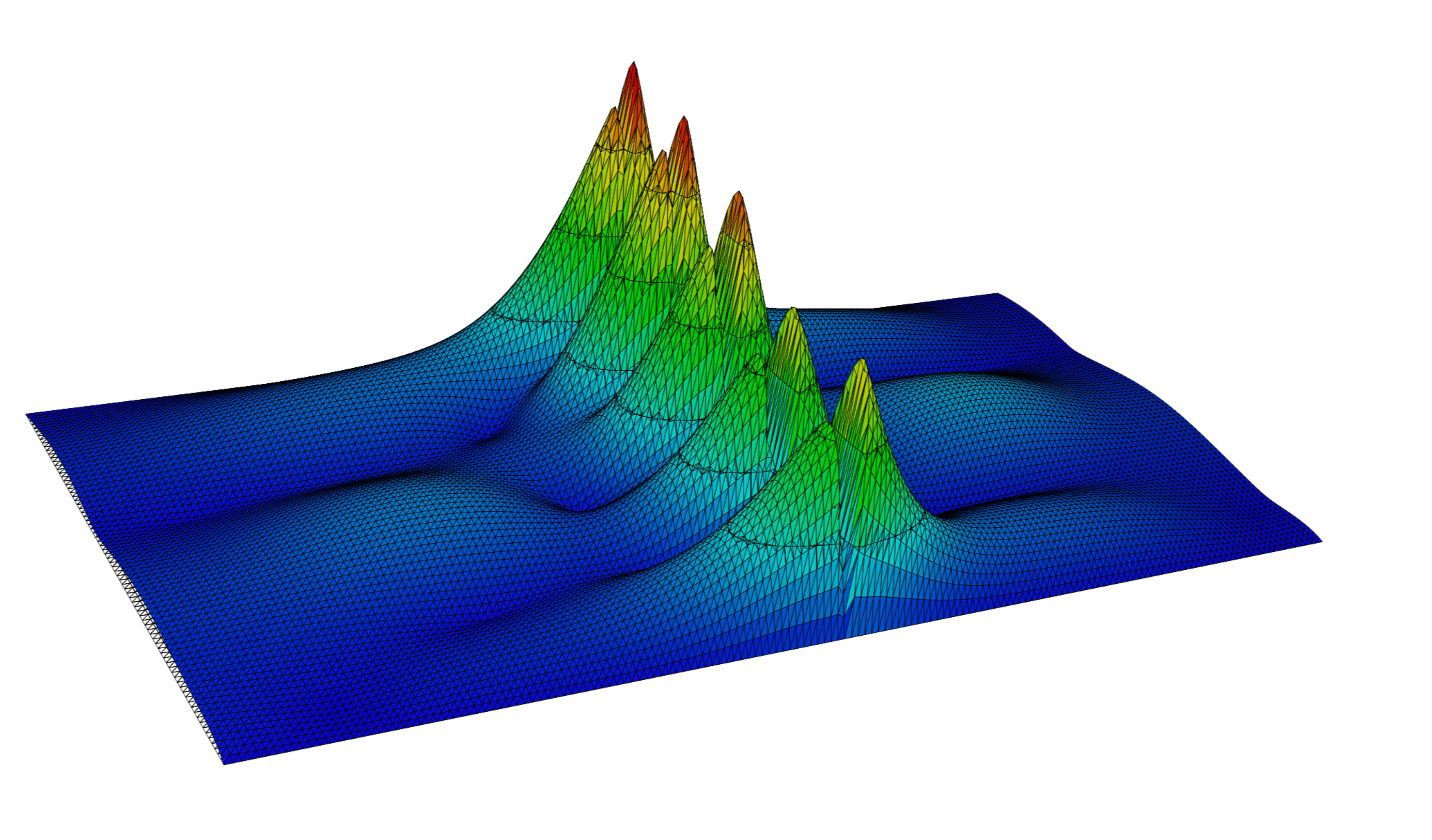}
  \includegraphics[width=0.45\textwidth,clip]{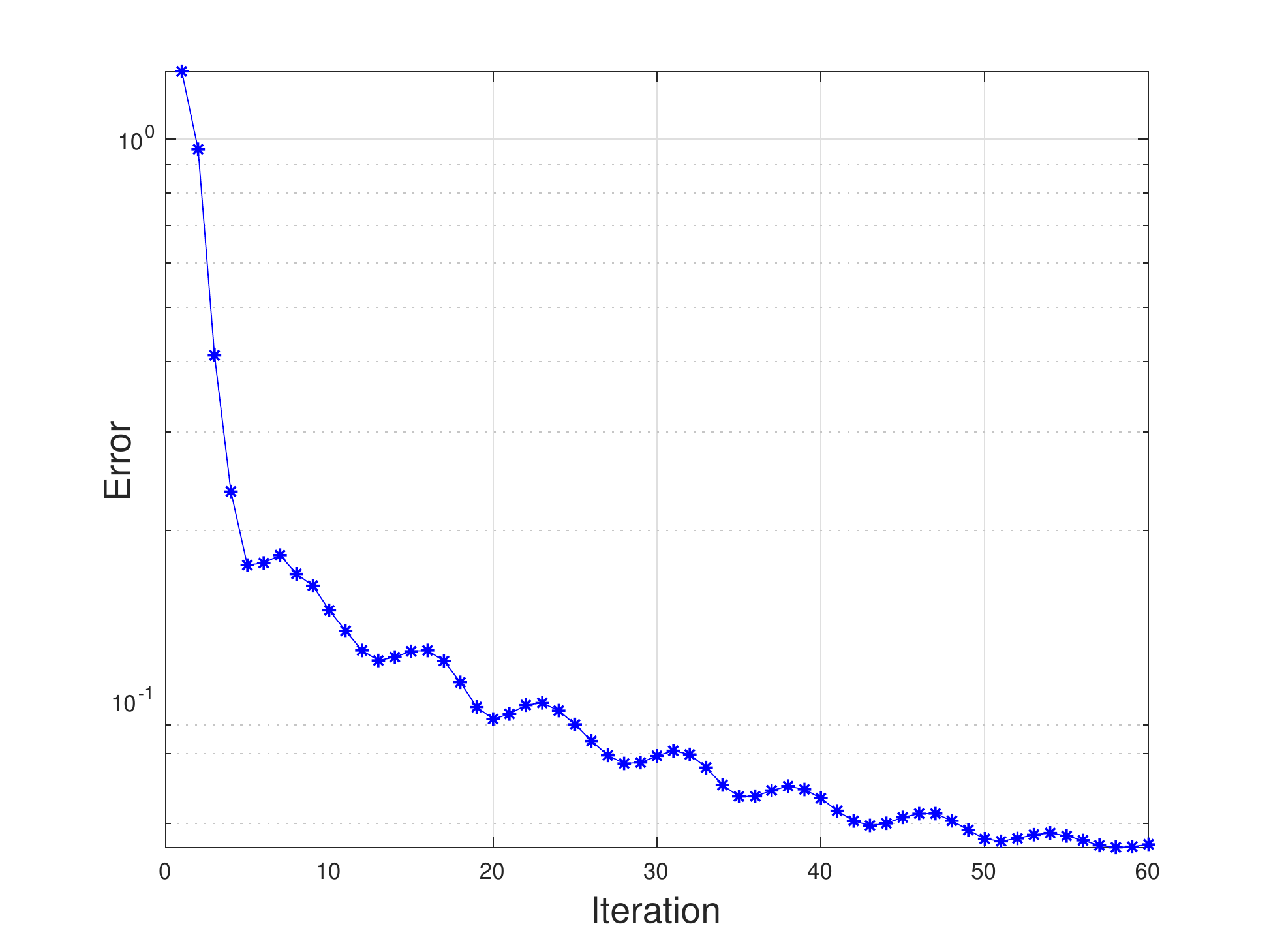}}
  \caption{\MG{Left:} Error in modulus at iteration 60 of the
    \MG{parallel optimized} Schwarz method with TTC \MG{for two
      subdomains}. \MG{Right: corresponding} convergence history
    ($\omega=5$, \MG{small overlap} $\delta=2h$).}
  \label{FigNumExp1}
\end{figure}
\MG{on the left} the error in modulus at iteration $60$ of the
optimized Schwarz method \MG{with} Taylor transmission conditions for
$\omega=5$ and overlap parameter $\delta = 2h$. We see that the
\MG{optimized Schwarz method stops converging: the interval for
  convergence problems predicted by Theorem
  \ref{OverlappingSchwarzTTCTheorem}} is
$\left[\frac{\omega}{C_s},k^{\star}\right] = [10,k^{\star}]$\MG{, and
  we observe} that the error \MG{on the left in Figure
  \ref{FigNumExp1}} has $5$ bumps along the interface which
corresponds well to the mode $|\sin(ky)|$ along the interface for
$k=5\pi\approx 15\MG{>\frac{\omega}{C_s}=10}$.

If we increase the overlap\MG{, $\delta=6h$, we see in Figure
  \ref{FigNumExp2} on the right}
 \begin{figure}
  \centering
 \mbox{\includegraphics[width=0.55\textwidth,trim=0 0 0 40,clip]{./figures/ErrorOpt_w=5_iter=60_ovr6h}
  \includegraphics[width=0.45\textwidth,clip]{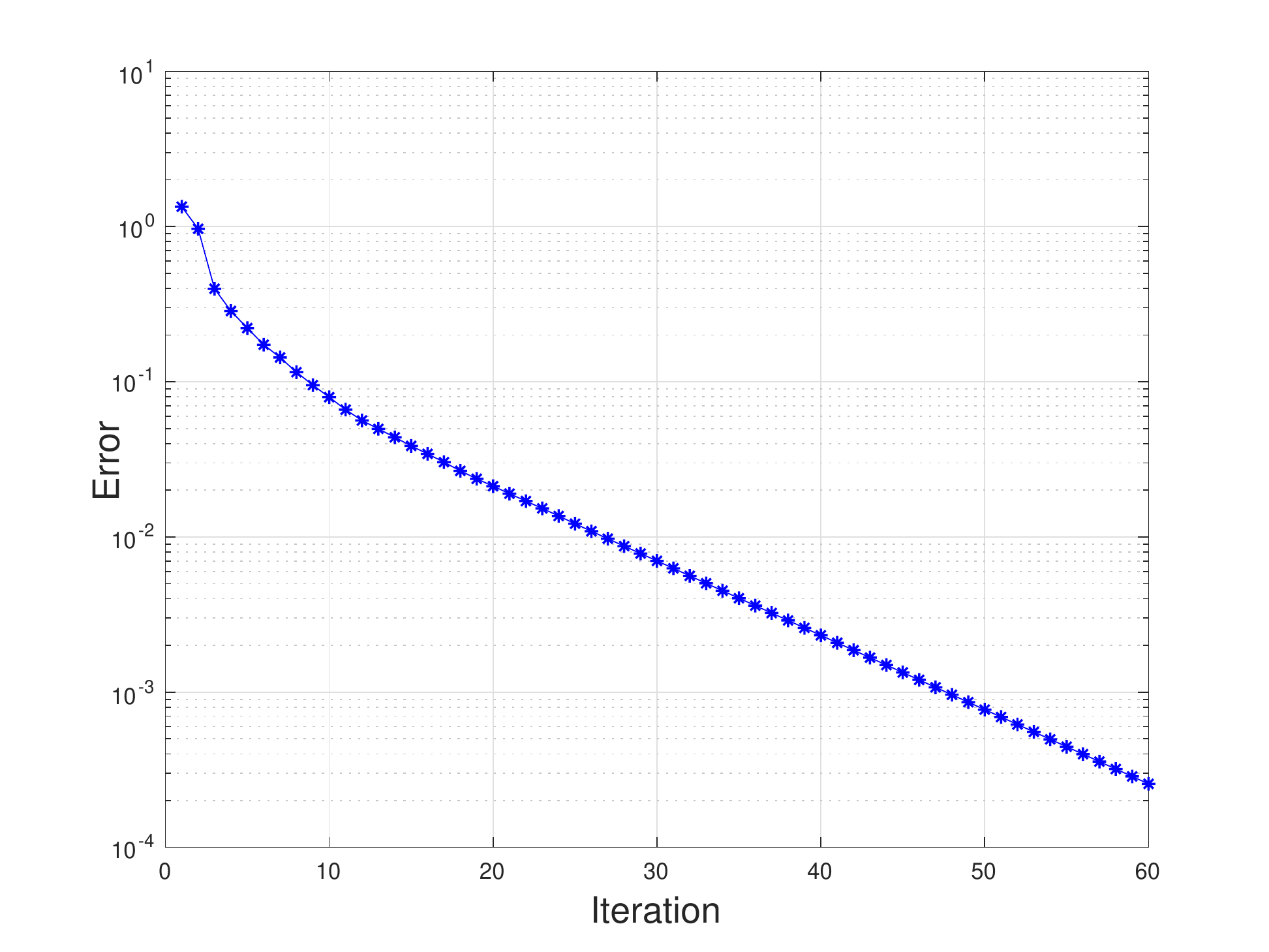}}
  \caption{\MG{Left: error} in modulus at iteration 60 of the
      \MG{optimized} Schwarz method with TTC and \MG{for two
        subdomains. Right: corresponding} convergence history
      ($\omega=5$, \MG{larger overlap} $\delta=6h$).}
  \label{FigNumExp2}
\end{figure}
 \MG{that the optimized Schwarz method is now converging.  The most
   slowly converging mode is shown on the left in Figure
   \ref{FigNumExp2}, and it also corresponds to a mode $|\sin(ky)|$
   along the interface with $k=4\pi\approx 12>\frac{\omega}{C_s}=10$,
   so our Fourier analysis captures accurately the convergence behavior
   of the optimized Schwarz method.}

\subsection{Comparing Schwarz as solver and preconditioner}

\MG{We next compare the performance of the Schwarz methods as solvers
  and preconditioners.} We simulate the wave propagation through a
computational domain given by the unit square \MG{$\Omega:=(0,1)^2$
  with absorbing} boundary conditions $\left({\cal T}^{({\bf n})} -
\rm i \mathbf{\sigma}_{{\bf n}}\right){\bf u} = \mathbf{g}$\MG{, where
  in} the two-dimensional case considered here
\begin{equation}
\mathbf{\sigma}_{{\bf n}} \MG{:}=\omega\rho \begin{pmatrix}
c_p n_x^2 + c_s n_y^2 & (c_p-c_s)n_x n_y \\
(c_p-c_s)n_x n_y & c_p n_y^2 + c_s n_x^2
\end{pmatrix}.
\end{equation}
\MG{The source term $\mathbf{g}$ is} chosen such that the exact
solution is a plane wave $\mathbf{u}^{inc}$ consisting \MG{of} both P-
and S-waves, $ \mathbf{u}^{inc}\MG{:}= {\bf d}\, e^{i\kappa_p {\bf
    x}\cdot {\bf d}}+ {\bf d}^{\perp}\, e^{i\kappa_s {\bf x}\cdot {\bf
    d}},\, {\bf d}= \left(\cos\left(\frac{\pi}{3}\right),
\cos\left(\frac{\pi}{3}\right)\right)^T$.  \MG{We choose the physical
  parameters $C_p=1$, $C_s=0.5$, $\rho=1$, $\lambda =
  \rho(C_p^2-2C_s^2)$, $\mu = \rho C_s^2$, and $\omega = 5$. We
  decompose the square domain $\Omega$} into $4\times 4$ \MG{equal}
subdomains \MG{$\Omega_i$} having each $40 \times 40$ discretization
points for a total number of 6400 \MG{degrees of freedom} per
subdomain.  The \MG{convergence of the Schwarz algorithms as solvers
  and preconditioners for GMRES} for different values \MG{of the}
overlap is shown in Figure \ref{FigNumExp3}.
\begin{figure}
  \centering
  \mbox{\includegraphics[width=0.5\textwidth,clip]{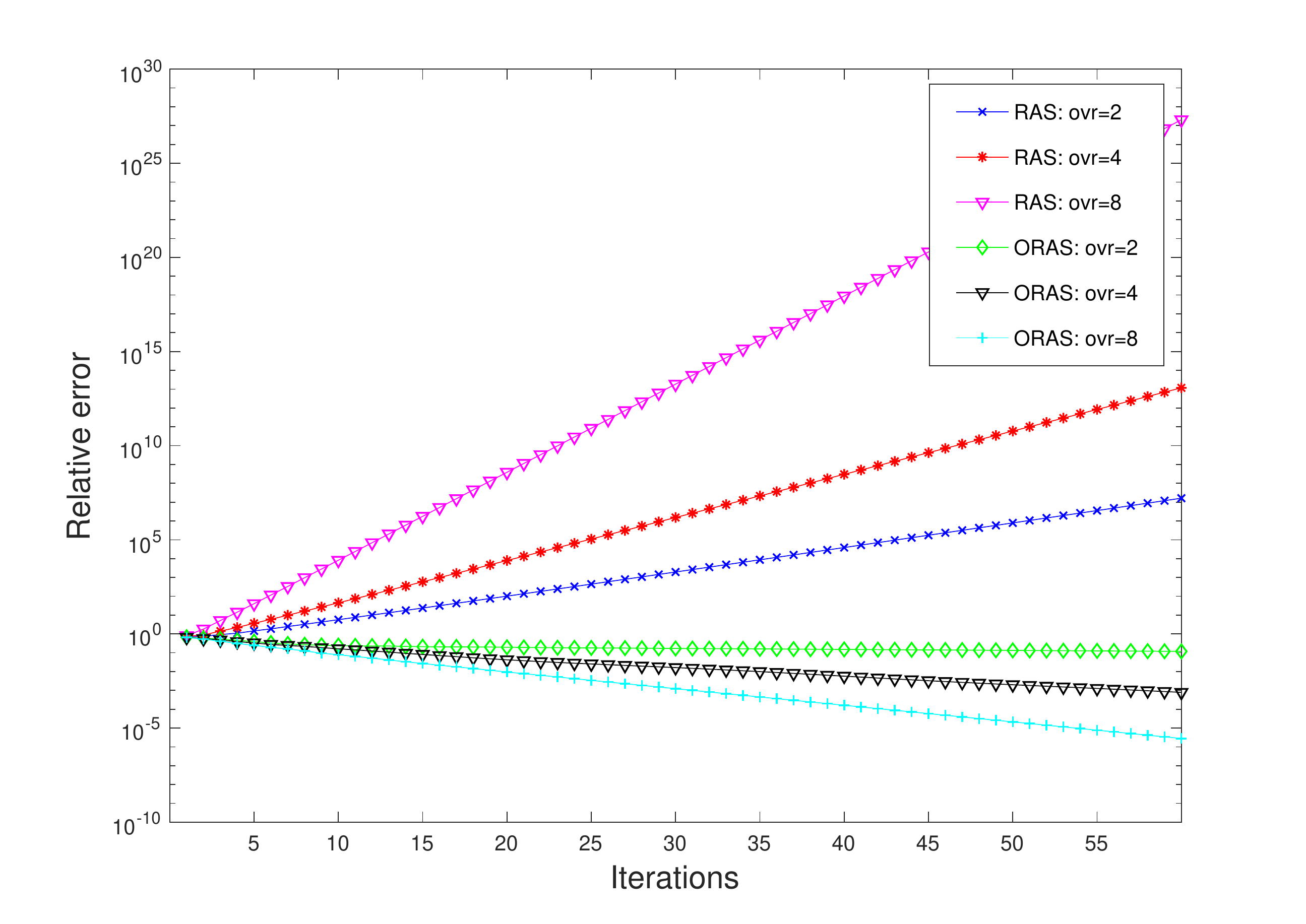}
    \includegraphics[width=0.5\textwidth,clip]{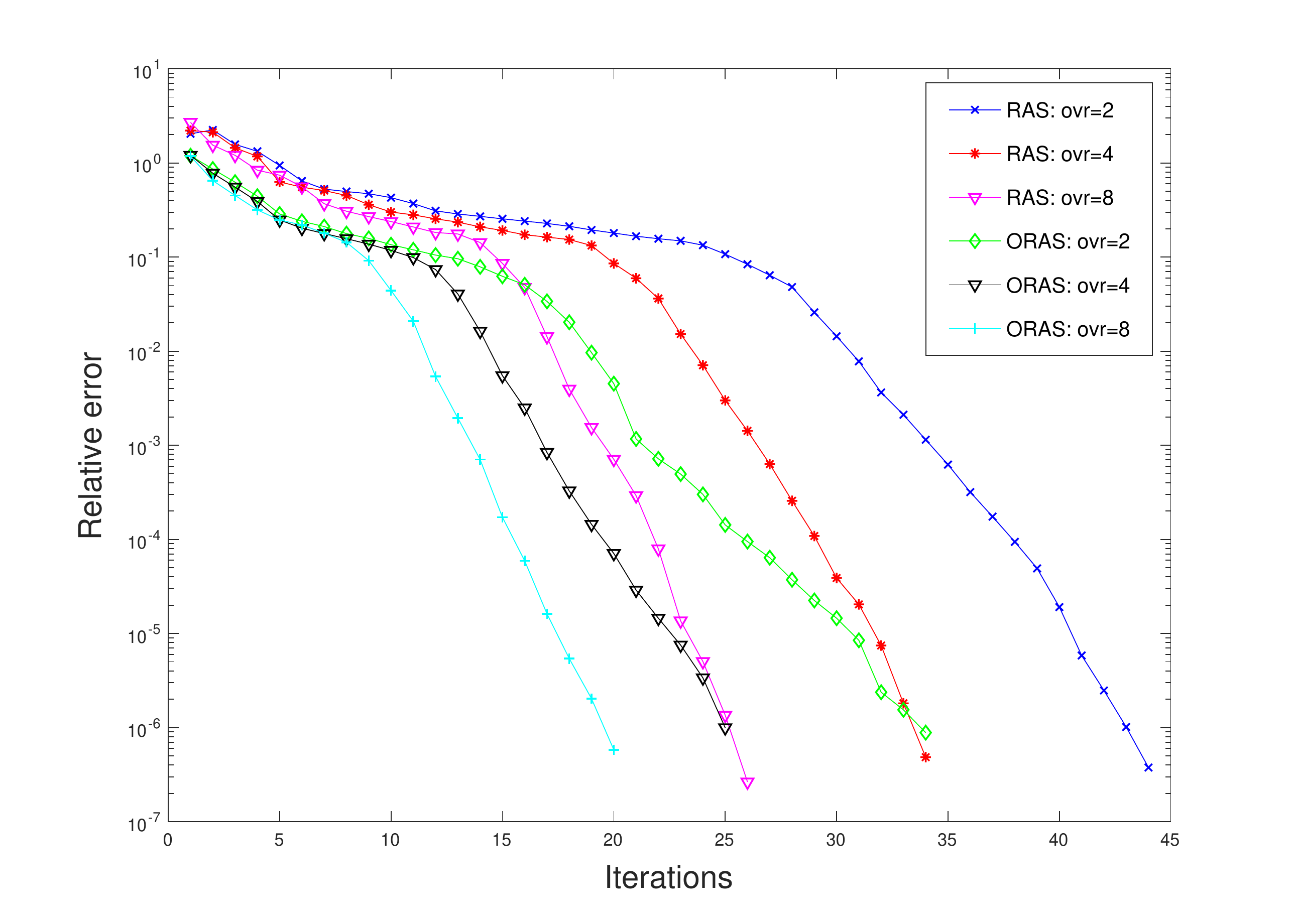}}
  \caption{Convergence history for RAS and ORAS as solvers (left) and
    preconditioners (right) for $\omega=5$, \MG{and different values of
    the overlap} $\delta$.}
  \label{FigNumExp3}
\end{figure}
As expected, the \MG{optimized Schwarz algorithm as solver converges,
  and the classical Schwarz algorithm diverges, for any size of the
  overlap. By increasing the overlap, as predicted by our two
  subdomain analyses in Theorem \ref{OverlappingSchwarzTTCTheorem} and
  \ref{ClassicalSchwarzTheorem}}, the \MG{optimized Schwarz} algorithm
is getting better, whereas \MG{classical Schwarz} is getting
worse. \MG{With GMRES acceleration, overlap also helps the classical
  Schwarz algorithm, but it still takes substantially more iterations
  to converge than the optimized one.}

\subsection{Solving a circular transmission problem}

\MG{We finally test our Schwarz methods for the Navier equations on a
  transmission problem formed by a circular inner part with radius 0.5
  that has different material characteristics from the surrounding
  outer part, truncated with absorbing boundary conditions at the
  radius 1. The heterogeneous physical parameters are given in Table
  \ref{tab:heter}.}
\begin{table}
\centering
  \tabcolsep0.5em
  \begin{tabular}{| c | c | c | c | c | c | c | c | c | c |} \hline
  Domain & E & $\nu$ & $\rho$ & $\mu$ & $\lambda$ & $C_p$ & $C_s$ & f & $\omega$  \\ \hline
 $r<0.5$ & $2.10^{11}$ & 0.3 & 7800 & $77.10^{9}$ & $12.10^{10}$ & 5927 & 3142 & $10^{4}$ & $2\pi 10^{4}$ \\ 
  $0.5\leq r\leq 1$ & $2.10^{11}$ & 0.47 & 7800 & $68.10^{9}$ & $11.10^{11}$ & 12588 & 2952 & $10^{4}$ & $2\pi 10^{4}$\\ \hline
  \end{tabular}
  \caption{Physical characteristics for the heterogeneous
    \MG{transmission problem.}}
\label{tab:heter}
\end{table}
We \MG{use METIS to partition} the unit disk $\{(x,y)| x^2+y^2\le 1\}$
into $4$ subdomains as \MG{shown} in Figure \ref{Partition} \MG{on the
left}.
\begin{figure}
  \centering
  \mbox{\includegraphics[width=0.5\textwidth,clip]{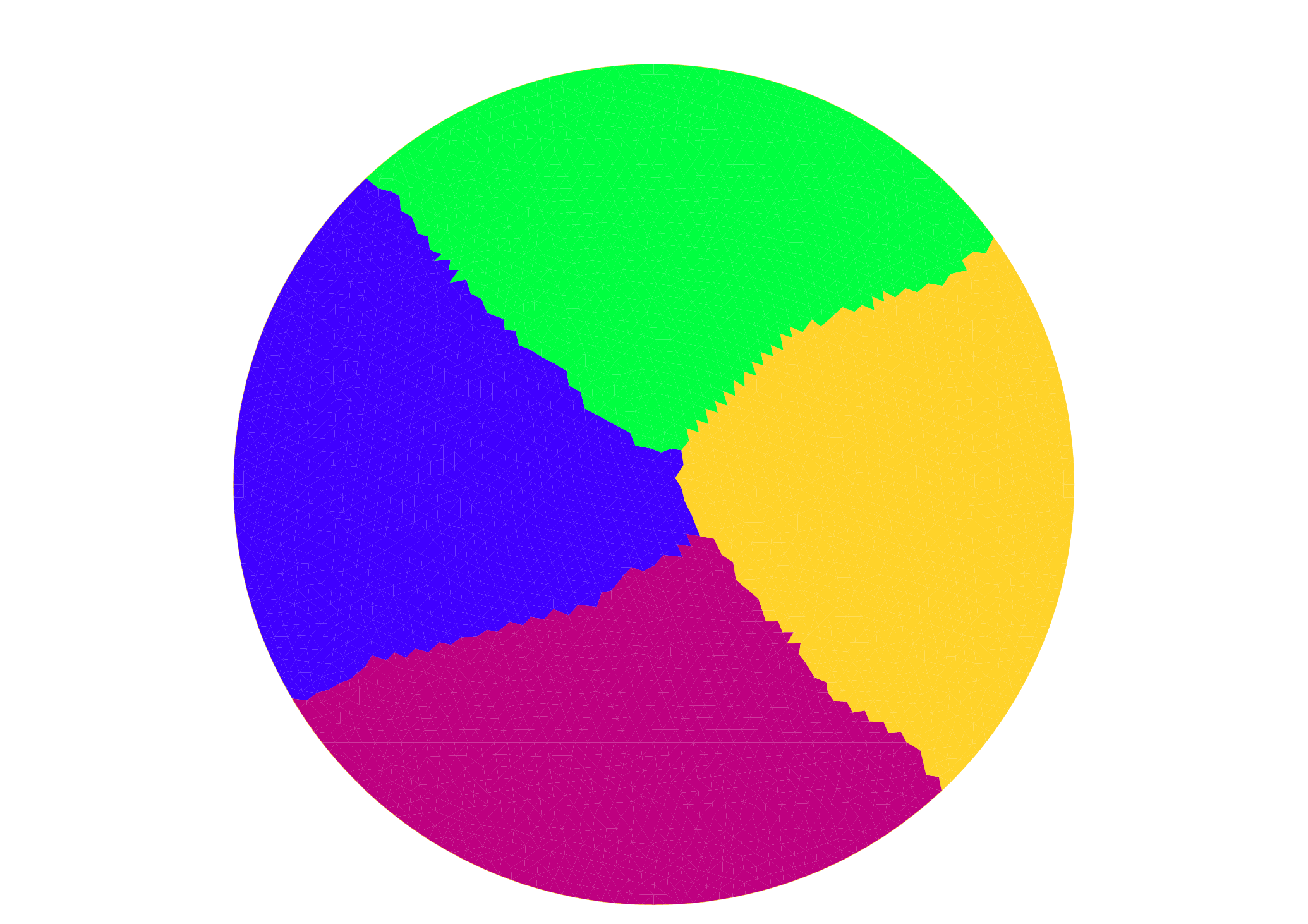}
  \includegraphics[width=0.5\textwidth,clip]{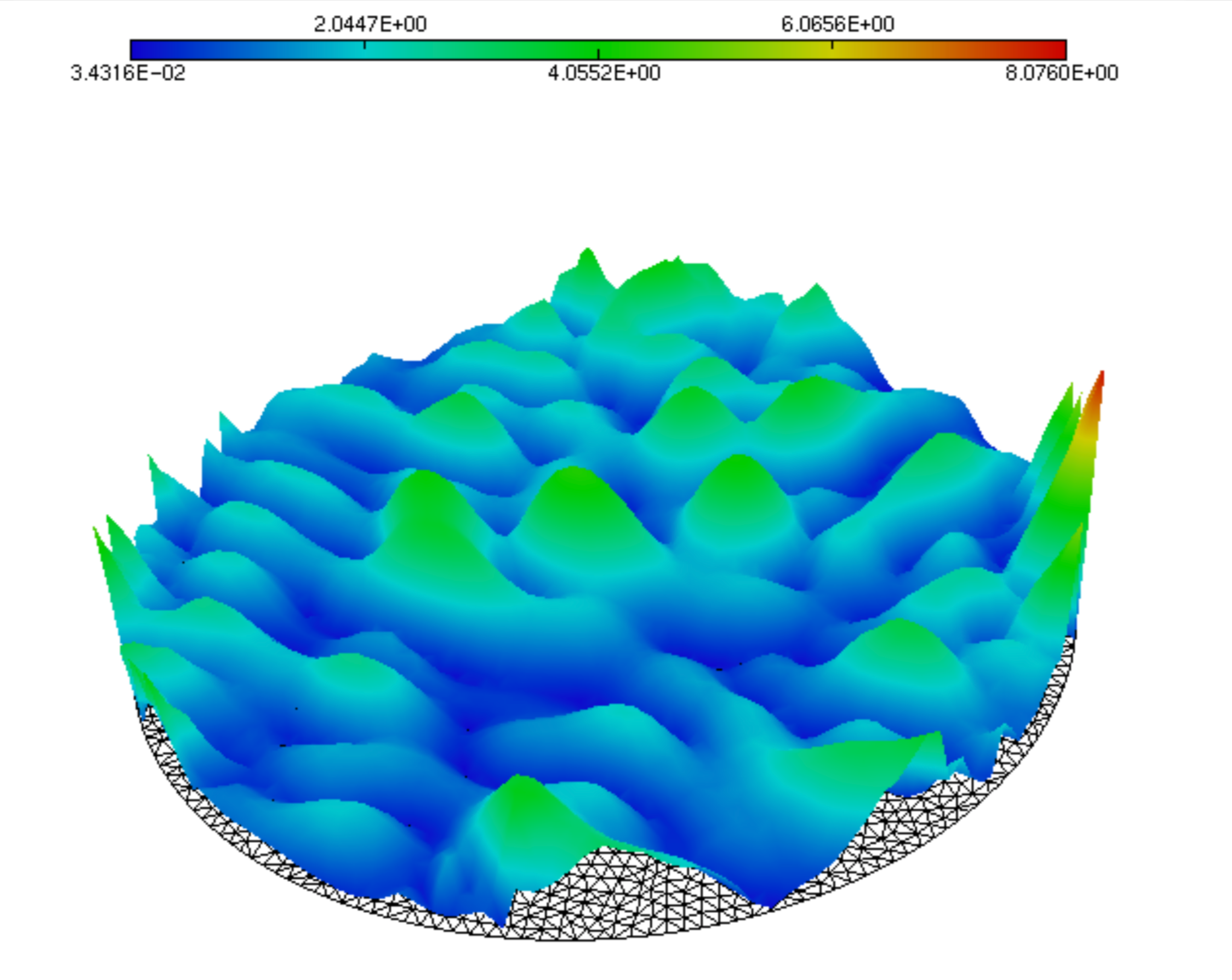}}
  \caption{METIS partition into 4 subdomains and \MG{monodomain} solution.}
  \label{Partition}
\end{figure}
\MG{The solution of the transmission problem we compute is shown in
  Figure \ref{Partition} on the right.  We test the different Schwarz
  methods again both as solvers and as preconditioners for GMRES; the
  corresponding results are shown} in Figure \ref{FigNumExp4}.
\begin{figure}
  \centering
  \mbox{\includegraphics[width=0.5\textwidth,clip]{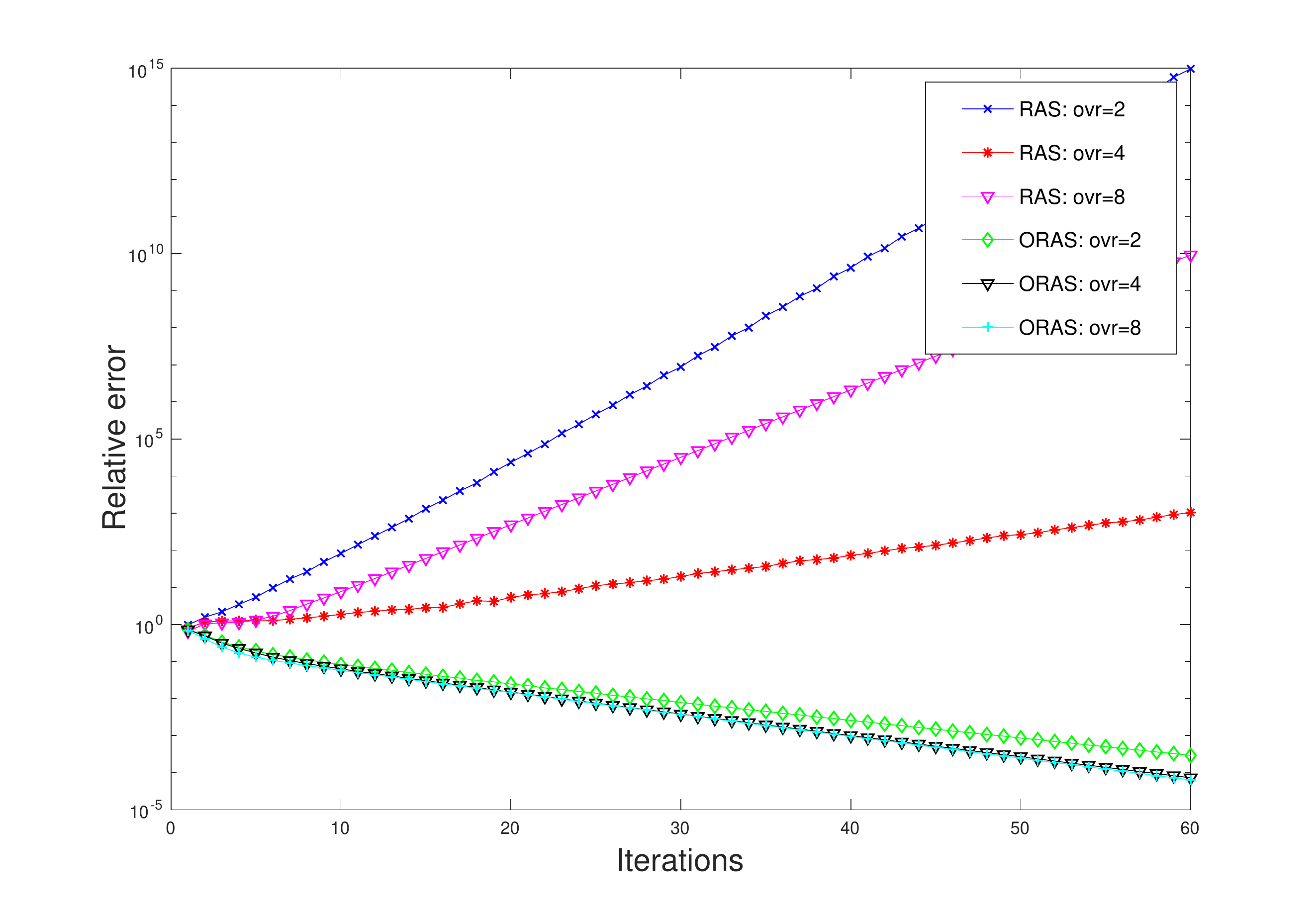}
    \includegraphics[width=0.5\textwidth,clip]{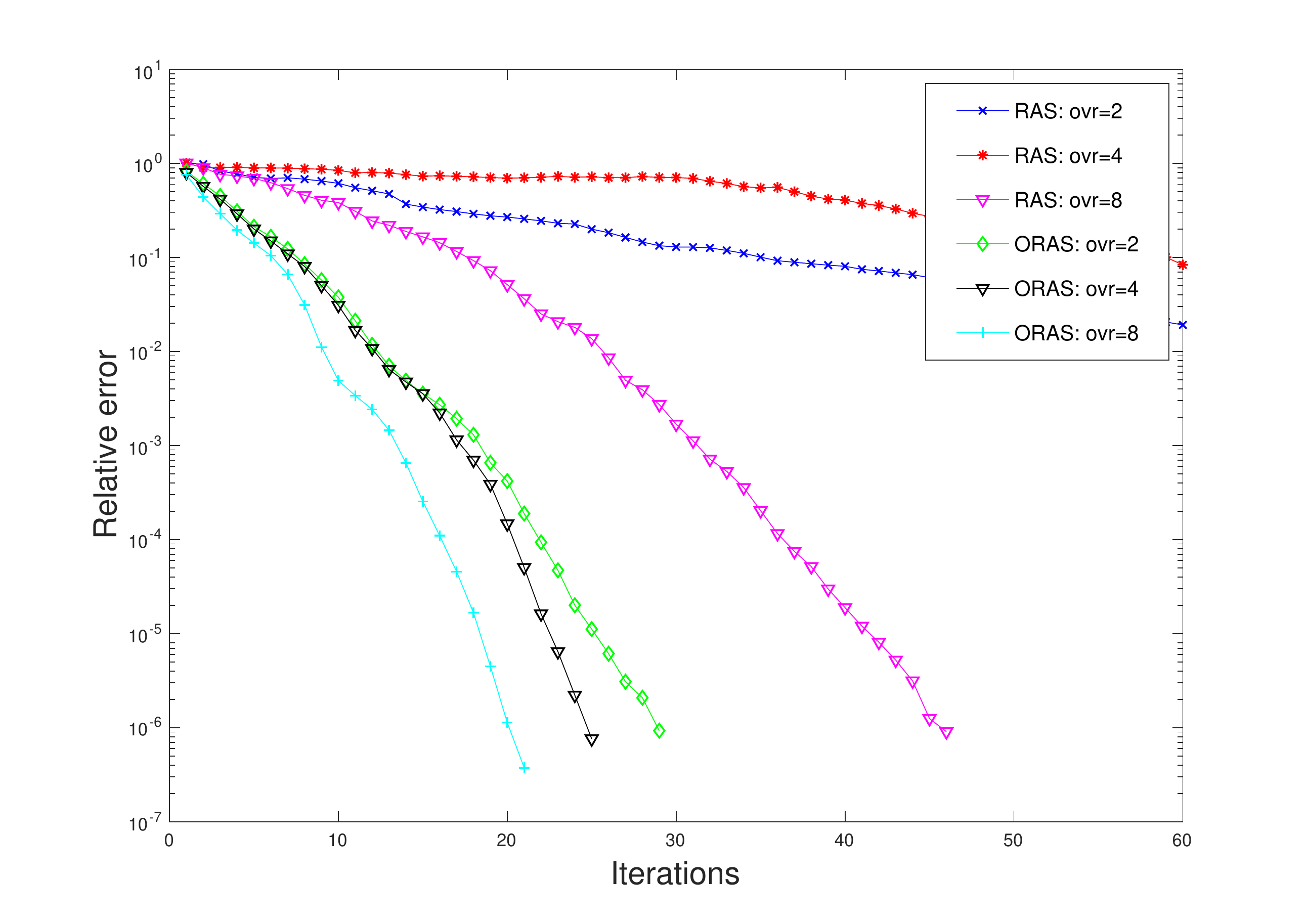}}
  \caption{Convergence history for \MG{classical and optimized Schwarz
      used} as solvers (left) and preconditioners (right) for the
    \MG{transmission} problem, and different values of
    \MG{the overlap} $\delta$.}
  \label{FigNumExp4}
\end{figure}
\MG{We see again that only the optimized Schwarz method with TTC
  converges when used as an iterative solver, the classical one
  diverges. This leads then naturally to a much better preconditioner
  for GMRES in the optimized Schwarz case for solving the
  transmission problem.}

\section{Conclusions}

\MG{We presented a first study of the applicability of Schwarz methods
  for the solution of time-harmonic elastic waves modeled by the
  Navier equations. We showed by a detailed and technical analysis for
  two subdomains that the classical Schwarz method can not converge
  when applied to the Navier equations. We then introduced more
  physical transmission conditions and showed that optimal
  transmission conditions exist which make the algorithm converge in
  two steps. Since these optimal transmission conditions involve
  non-local operators, we also introduced a local, low-frequency
  approximation, and proved that the new, optimized Schwarz method is
  then convergent, provided the overlap is large enough. We then
  tested the Schwarz methods both for the two subdomain case, and also
  for many subdomains, including a heterogeneous transmission problem,
  and we observed numerically that the new, optimized Schwarz method
  can indeed be used as an iterative solver, while the classical one
  can not, since it is divergent. The new transmission conditions lead
  also to a much better Schwarz preconditioner for GMRES than the classical
  ones. Our analysis opens the path to further development,
  namely transmission conditions which do not only improve the low
  frequency behavior, but improve the convergence over the entire
  spectrum of the iteration operator, a topic which we are currently
  investigating. }

\bibliographystyle{plain}
\bibliography{paper}

\end{document}